	\DeclareMathOperator{\reg}{reg}
	\DeclareMathOperator{\height}{ht}
	\theoremstyle{plain}
	\newtheorem{thm}{Theorem}[section]
	\newtheorem{prop}[thm]{Proposition}
	\newtheorem{cor}[thm]{Corollary}
	\newtheorem{lem}[thm]{Lemma}
	\theoremstyle{definition}
	\newtheorem{defn}[thm]{Definition}
	\theoremstyle{remark}
	\newtheorem{ex}[thm]{Example}
\begin{document}
	
	\title{Dimensions of Betti Cones on Edge Ideals}
	\author{David Carey}
	\date{}
	\maketitle
	
	\begin{abstract}
		Boij-S\"{o}derberg Theory views the Betti diagrams of graded modules over polynomial rings as vectors in a $\mathds{Q}$-vector space, and studies the cone that these vectors generate (called a `Betti Cone'). The objects of study in this paper are the Betti cones generated by edge ideals. This paper presents and proves a formula for the dimensions of these cones, and for the subcones generated by edge ideals of specific heights.
	\end{abstract}
	
\section{Introduction}\label{sec: introduction}
	Throughout this paper, we fix a field $k$ (of arbitrary characteristic), and two non-negative integers $n$ and $h$ with $h<n$. We work inside the ring $R=k[x_1,...,x_n]$. We also let $[n]$ denote the set $\{1,...,n\}$.
	
	The central insight of Boij-S\"{o}derberg theory is that all Betti diagrams $\beta(M)$ of graded $R$-modules $M$ can be seen as vectors lying in the infinite-dimensional rational vector space $V_n=\bigoplus_{d\in \mathds{Z}} \mathds{Q}^{n+1}$, and that looking at the convex cone generated by these diagrams (called a `Betti Cone') provides a useful framework in which to study the diagrams themselves.
	
	This technique has proven to be very instructive in the study of Betti diagrams of arbitrary $R$-modules. We hope that it may prove similarly enlightening when applied more narrowly to the cones generated by certain classes of modules, such as edge ideals. The first step towards understanding these cones is to work out their dimensions.
	
	Edge ideals are squarefree monomial ideals in $R$ corresponding to graphs. They are defined as follows.
	\begin{defn}\label{def: edge ideal}
		Let $G$ be a graph with vertex set $[n]$. The edge ideal $G$, denoted $I(G)$, is the ideal in $R$ generated by $\{x_ix_j|\{i,j\}\in E(G)\}$.
	\end{defn}

	In this paper we restrict our attention to the Betti cone generated by all diagrams $\beta(I(G))$ for graphs $G$ on vertex set $[n]$, and the subcone of this cone generated by those graphs whose edge ideals have height $h$. We denote these cones by $\mathcal{C}_n$ and $\mathcal{C}_n^h$ respectively.
	
	We present and prove formulae for the dimensions of these cones, in terms of $n$ and $h$.
	In doing so, we exhibit, for each $n$, the minimal subspace $W_n$ of $V_n$ containing $\mathcal{C}_n$; and, for each $h$, we construct the minimal subspace $W_n^h$ of $W_n$ containing $\mathcal{C}_n^h$, as an intersection of defining hyperplanes in $W_n$.
	
	Suppose $\beta$ is a Betti diagram of an $R$-module. As is standard, we draw $\beta$ as a matrix $(a_{ij})$ with $a_{ij}=\beta_{i,i+j}$.
	
	\begin{equation*}\label{first-matrix}
		\begin{bmatrix}
			\vdots& \vdots& &\vdots \\
			\beta_{0,0} & \beta_{1,1} & \dots & \beta_{n,n} \\
			\beta_{0,1} & \beta_{1,2} & \dots & \beta_{n,n+1}\\
			\vdots & \vdots & & \vdots\\ 
		\end{bmatrix}
	\end{equation*}
	This matrix is infinite, but only finitely many of the entries are non-zero. For the diagrams in our cones $\mathcal{C}_n$ and $\mathcal{C}_n^h$, there are a lot of explicit restrictions on the positions of these non-zero values, which can help us to find the cones' dimensions.
	
	Ultimately, we work towards proving the following results.
	
	\begin{restatable}{thm}{dimCn}\label{dimCn}
		Let $\mathcal{C}_n$ be the cone generated by diagrams $\beta(I(G))$ for graphs $G$ on vertex set $[n]$. We have
		\begin{align*}
			\dim \mathcal{C}_n = \begin{cases}
				r^2 &\text{ if } n=2r \\
				r^2 + r &\text{ if } n=2r+1\, .\\
			\end{cases}
		\end{align*}
	\end{restatable}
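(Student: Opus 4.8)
My plan is to compute $\dim \mathcal{C}_n$ as the dimension of the linear span $W_n$ of the diagrams $\beta(I(G))$, by sandwiching it between two equal bounds: an upper bound coming from the support of these diagrams, and a matching lower bound coming from an explicit family of graphs whose diagrams are linearly independent. The equality of the two bounds is exactly the statement that edge-ideal diagrams satisfy no linear relations beyond the obvious support constraints.

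First I would pin down the support region. Writing $a_{ij} = \beta_{i,i+j}(I(G))$ and applying Hochster's formula $\beta_{i,i+j}(I(G)) = \sum_{|W| = i+j} \dim_k \tilde{H}^{j-2}(\mathrm{Ind}(G[W]))$, I would record four restrictions on which entries can be nonzero: (i) rows $j \le 1$ vanish, since $I(G)$ is generated in degree two and $\tilde{H}^{-1}$ of a nonempty complex is zero; (ii) $i+j \le n$, since squarefree Betti numbers are supported in degrees at most $n$; (iii) within each row $i \ge j-2$; and (i)--(iii) then force (iv) $j \le r+1$. Restriction (iii) is the crux of the upper bound: if $a_{ij} \ne 0$ then some induced subgraph $H = G[W]$ on $m = i+j$ vertices has $\tilde{H}^{j-2}(\mathrm{Ind}(H)) \ne 0$, whence $\reg(R/I(H)) \ge j-1$, and the matching-number bound $\reg(R/I(H)) \le \nu(H) \le \lfloor m/2\rfloor$ gives $j-1 \le (i+j)/2$, i.e. $i \ge j-2$. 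Thus $W_n$ lies in the coordinate subspace indexed by $S_n = \{(i,j) : j-2 \le i,\ 2 \le j,\ i+j \le n\}$, and summing the row sizes $n-2j+3$ over $j = 2, \dots, r+1$ yields $|S_n| = r(n-r)$, which is $r^2$ when $n = 2r$ and $r^2+r$ when $n = 2r+1$.

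It then remains to show the diagrams span this whole subspace. For this I would exhibit one graph per position: set $G_{j,s} = (j-2)K_2 \sqcup K_{1,s}$ for $2 \le j \le r+1$ and $1 \le s \le n-2j+3$, padded with isolated vertices (which contribute free factors and do not change Betti numbers). Since the edge ideal of a disjoint union is a sum of ideals in disjoint variables, its resolution is the tensor product of the pieces, and $\mathrm{Ind}(G_1 \sqcup G_2) = \mathrm{Ind}(G_1) * \mathrm{Ind}(G_2)$; the $j-2$ disjoint edges contribute $\tilde{H}_{j-3} = k$ on their join-factor and the star $K_{1,s}$ contributes $\tilde{H}_0 = k$, so by the Künneth formula for joins the top homology sits in degree $j-2$, forcing $\reg(I(G_{j,s})) = j$ and a unique top-strand, top-degree Betti number $\beta_{j-3+s,\,2j-3+s}(I(G_{j,s})) = 1$. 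In matrix coordinates this is the entry at $(i,j) = (j-3+s,\,j)$, and as $s$ runs over $1, \dots, n-2j+3$ it sweeps columns $i = j-2, \dots, n-j$ of row $j$; hence $\{G_{j,s}\}$ is indexed bijectively by $S_n$.

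Finally I would deduce linear independence by triangularity. Order the positions of $S_n$ by $(j,i)$, row first then column. The diagram of $G_{j,s}$ vanishes in every row above $j$ (its regularity is $j$) and, in row $j$ itself, vanishes strictly to the right of its target column, since a larger $i$ would require $|W| > |V(G_{j,s})|$; at the target it equals $1$. Thus the $|S_n| \times |S_n|$ matrix recording these diagrams against the positions is triangular with nonzero diagonal, so the diagrams are linearly independent and $\dim W_n \ge |S_n|$. Combined with the upper bound this gives $\dim \mathcal{C}_n = \dim W_n = |S_n| = r(n-r)$, as claimed. The main obstacle is the support characterization, in particular the lower edge $i \ge j-2$ via the matching bound on induced subgraphs, together with checking that the explicit family is genuinely triangular; the remaining steps are bookkeeping.
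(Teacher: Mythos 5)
Your proposal is correct and follows essentially the same strategy as the paper: an upper bound from the support constraints $d\geq i+2$, $d\leq n$, $d\leq 2i+2$ (which you derive from the matching-number regularity bound rather than citing it directly), giving $|S_n|=r(n-r)$, and a matching lower bound from a triangular family of disjoint unions of edges with a row-one graph, using exactly the paper's ``initial vector'' ordering (row first, then column). The only substantive variation is that your row-one witnesses are stars $K_{1,s}$ with top Betti number computed directly via joins, where the paper uses complete graphs $K_m$ and descends through the rows inductively via the suspension formula $\beta_{i,d}(G+L)=\beta_{i,d}(G)+\beta_{i-1,d-2}(G)$; both work.
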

	
	\begin{restatable}{thm}{dimCnh}\label{dimCnh}
		Let $\mathcal{C}_n^h$ be the cone generated by diagrams $\beta(I(G))$ for graphs $G$ on vertex set $[n]$ whose edge ideals have height $h$. We have
		\begin{align*}
			\dim \mathcal{C}_n^h = h(n-h-1) + 1\, .
		\end{align*}
	\end{restatable}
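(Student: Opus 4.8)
The plan is to pass from the cone to its linear span. Since a convex cone generated by a set of vectors contains the origin and is closed under positive scaling and addition, its affine hull coincides with its linear span, so $\dim \mathcal{C}_n^h = \dim W_n^h$ where $W_n^h = \operatorname{span}\{\beta(I(G)) : \height I(G) = h\}$. I would then prove the formula by bounding $\dim W_n^h$ from above, by exhibiting defining hyperplanes, and from below, by exhibiting enough explicit linearly independent diagrams, and matching the two bounds.

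For the upper bound I would first pin down the support of these diagrams. Two facts control it. First, $\height I(G) = \tau(G)$ (the minimum vertex-cover number), so $\height I(G) = h$ gives $\dim R/I(G) = n-h$. Second, since $\reg(R/I(G)) \le \nu(G) \le \tau(G) = h$ (the matching number bounds the regularity of an edge ideal, and the edges of a matching are covered by distinct cover vertices), every nonzero $\beta_{i,j}(I(G))$ satisfies $i+2 \le j \le i+h+1$; equivalently the diagram occupies only the $h$ rows $r = 2,\dots,h+1$ in the matrix coordinates $r = j-i$. Together with the column bounds coming from $\pd(R/I(G))$, these vanishing statements describe the relevant coordinates of $W_n$ (cf.\ \Cref{dimCn}), and I will argue that exactly $h(n-h)$ of them can be nonzero for some height-$h$ graph. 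The genuine linear relation imposed by the height is Hilbert-theoretic: $\dim R/I(G) = n-h$ forces $(1-t)^h \mid K_{R/I(G)}(t)$, where $K_{R/I}(t) = \sum_{i,j}(-1)^i \beta_{i,j}(R/I)\, t^j$. Writing this for the ideal via $K_{R/I} = 1 - K_I$, the order-$0$ condition $K_I(1) = 1$ is inhomogeneous and so imposes no constraint on the linear span, whereas the derivative conditions $\tfrac{d^j}{dt^j}K_I(t)\big|_{t=1} = 0$ for $j = 1,\dots,h-1$ are homogeneous (they are Herzog--K\"uhl-type equations) and give $h-1$ hyperplanes. Granting that these $h-1$ hyperplanes are independent on the $h(n-h)$-dimensional support space, I obtain $\dim W_n^h \le h(n-h) - (h-1) = h(n-h-1)+1$.

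For the lower bound I would construct a concrete family of height-$h$ graphs whose diagrams span a space of the full conjectured dimension. The building blocks all have computable resolutions: complete bipartite graphs $K_{h,m}$, whose complements are chordal so their edge ideals resolve linearly and populate the top row $r=2$; and disjoint unions $sK_2 \sqcup K_{h-s+1}$ of an induced matching with a clique, which have height exactly $h$ and $\reg(R/I) = s+1$, hence reach every row $r = 2,\dots,h+1$, and whose diagrams are tensor/mapping-cone combinations of Koszul and linear resolutions and are therefore explicit. Spreading each block across varying numbers of the $n-h$ non-cover vertices produces a two-parameter family indexed essentially by row and homological degree. I would then prove linear independence by a triangular ``leading position'' argument: order the graphs so that each one introduces a nonzero entry at a Betti position that is extremal, say of largest homological degree in its row, and not met by any earlier graph, forcing the diagrams to be independent and yielding $\dim W_n^h \ge h(n-h-1)+1$.

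The main obstacle is making the two bounds meet exactly. On the upper-bound side the delicate point is the precise support: the region is a staircase rather than a box, since $\pd(R/I(G))$ can exceed $\dim(R/I(G))$, as for the Cohen--Macaulay five-cycle, so counting the $h(n-h)$ possibly-nonzero coordinates and checking that the $h-1$ Hilbert hyperplanes are independent modulo the vanishing relations requires care. On the lower-bound side the tension is that the natural way to raise regularity, enlarging an induced matching, also raises the vertex-cover number, so keeping the height equal to $h$ while realizing every row up to $r = h+1$ and every relevant homological degree is exactly what the clique-plus-matching family is designed to resolve. Verifying that this family is rich enough to hit all $h(n-h-1)+1$ independent directions, with no further hidden linear relations, is where the bulk of the work lies.
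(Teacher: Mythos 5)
Your overall strategy (bound the support, cut by the Herzog--K\"{u}hl hyperplanes, then exhibit a triangular family of explicit diagrams) is the same as the paper's, but there are two genuine gaps. The first is in the upper bound: you control the rows only via $\reg I(G) \leq \nu(G)+1 \leq h+1$, which gives $d-i \leq h+1$. The paper needs, and proves, a second bound $\reg I(G) \leq n-h+1$, obtained from Hochster's formula together with the fact that the independence complex of a height-$h$ graph has dimension $n-h-1$ (Lemma \ref{ht} and Proposition \ref{Snh}); the true constraint is $d-i \leq \min\{h,n-h\}+1$. Your matching-number bound alone is not enough when $h > n/2$: for example with $n=6$, $h=4$ your region is all of $S_6$ (nine coordinates), so your count would be $9-(h-1)=6$, not $h(n-h-1)+1=5$; the position $(2,6)$ must be killed by the $\reg \leq n-h+1$ bound, which your argument does not supply. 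So the claim that ``exactly $h(n-h)$ of them can be nonzero'' does not follow from the vanishing statements you list, and the upper and lower bounds fail to meet in this range.

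The second gap is in the lower bound. The family $sK_2 \sqcup K_{h-s+1}$ (padded with isolated vertices, which by Proposition \ref{beta-iso} does not change the diagram) produces essentially one diagram per row, whereas you need on the order of $n-2\rho+1$ independent diagrams in row $\rho$, staggered so that a leading-position argument applies; varying the clique size to move along a row changes the height. Moreover, within a row the positions with $i<h$ cannot be reached triangularly by clique-plus-matching graphs at all: a single such graph is nonzero at \emph{every} position $(1,4),\dots,(h-1,h+2)$ of row $2$ simultaneously, so it can serve as the leading diagram for only one of them. The paper resolves exactly this tension with three ingredients you would need analogues of: complements of $E_h+K_{d-h}$ for row $1$ (Corollary \ref{cor-Kl}), complements of $C_d+E_{h+2-d}$ for the row-$2$ positions with $i<h$ (Corollary \ref{cor-Cl}), whose supports \emph{start} at varying columns and hence become initial only after reversing the order within that part of the row (the ordering $\prec_h$ of Definition \ref{def: order-Cnh}), and an induction on $h$ that adds a disjoint edge to height-$(h-1)$ graphs (Lemma \ref{G+L initiality Cnh}). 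As written, your construction does not produce $h(n-h-1)+1$ linearly independent diagrams once $h \geq 2$.
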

	
	Our proofs for both of these results proceed in roughly the same way: first, we bound the dimension from above by finding finite-dimensional subspaces of $V_n$ containing the cone; then we bound it from below by exhibiting an appropriately sized linearly independent set of vectors lying in the cone.
	
	\section{Preliminary Results}\label{sec: prelim-results}
	In this section, we present a number of important results on Betti diagrams of graphs. Throughout, we fix a graph $G$ and a simplicial complex $\Delta$, both on vertex set $[n]$. We set $\beta=\beta(I(G))$. We often write this as $\beta(G)$ for brevity.
	
	Because $I(G)$ is a squarefree monomial ideal, it is also the Stanley-Reisner ideal of some simplicial complex (see \cite{CCA}, Theorem 1.7). For a definition and treatment of Stanley-Reisner ideals and rings, we refer the reader to Definition 5.1.2 in \cite{C-M}, and the subsequent discussion. Following the notation of this text, we use $I_\Delta$ for the Stanley-Reisner ideal of a simplicial complex $\Delta$, and $k[\Delta]$ for the corresponding Stanley-Reisner ring $R/I_\Delta$.
	
	Specifically, $I(G)$ is the Stanley-Reisner ideal of the independence complex $\Delta(G)$ of $G$, whose faces are the sets of vertices of $G$ such that no two vertices share an edge in $G$ (see \cite{SFMon}, Lemma 2.15). The complex $\Delta(G)$ can also be thought of as the clique complex of the complement $G^c$ of $G$, whose faces are the cliques in $G^c$. This way of framing $I(G)$ is particularly useful, because it means $\beta$ can be thought of as $\beta(I_{\Delta(G)})$.
	
	\subsection{Key Tools}\label{sec: key tools}
	There are a number of powerful results that can help in classifying the diagrams that generate these cones. The two we use most in this paper are listed below.
	
	The first is Hochster's Formula (see \cite{MonIdeals}, Theorem 8.1.1), which allows us to compute the Betti numbers of Stanley-Reisner ideals from combinatorial data, in terms of the homology groups of their corresponding simplicial complexes. The second are the Herzog-K\"{u}hl equations (see \cite{Floy}, Section 1.3), which are linear dependency relations that hold between the Betti numbers of any graded module of a specified codimension.
	
	\begin{prop}[Hochster's Formula]\label{hoc}
		Let $\Delta$ be a simplicial complex on vertex set $[n]$. For any integers $i$ and $d$, we have
		\begin{equation*}
			\beta_{i,d}(I_\Delta) =\sum_{|U|=d} \dim_k \widetilde{H}_{d-i-2}(\Delta_U)\, .
		\end{equation*}
	\end{prop}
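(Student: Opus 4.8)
The plan is to reduce the statement to a computation of multigraded Tor modules and then to identify a single multigraded strand of the Koszul complex with the reduced simplicial chain complex of an induced subcomplex. Recall first that $\beta_{i,d}(M)=\dim_k\operatorname{Tor}_i^R(M,k)_d$ for any finitely generated graded $R$-module $M$. Since $k[\Delta]=R/I_\Delta$ carries a natural fine $\mathds{Z}^n$-grading refining the standard grading, every Tor module splits as $\operatorname{Tor}_i^R(k[\Delta],k)=\bigoplus_{\mathbf{a}\in\mathds{Z}^n}\operatorname{Tor}_i^R(k[\Delta],k)_{\mathbf{a}}$, and I would compute each multigraded piece using the Koszul complex $K_\bullet$ on $x_1,\dots,x_n$, via $\operatorname{Tor}_i^R(k[\Delta],k)_{\mathbf{a}}=H_i\bigl((K_\bullet\otimes_R k[\Delta])_{\mathbf{a}}\bigr)$.

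First I would record the combinatorial shape of this strand. Writing $e_S$ for the Koszul generator in homological degree $|S|$ and multidegree $\mathbf{e}_S=\sum_{j\in S}\mathbf{e}_j$, the degree-$\mathbf{a}$ part of $K_i\otimes_R k[\Delta]$ has a $k$-basis given by the symbols $e_S\otimes x^{\mathbf{a}-\mathbf{e}_S}$ with $|S|=i$ and $x^{\mathbf{a}-\mathbf{e}_S}$ a nonzero monomial of $k[\Delta]$, i.e. with $\operatorname{supp}(\mathbf{a}-\mathbf{e}_S)\in\Delta$. A routine but necessary step is to show that only squarefree multidegrees $\mathbf{a}=\mathbf{e}_U$ contribute: in a non-squarefree degree one exhibits an explicit contracting homotopy on the strand, so that its homology vanishes. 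This reduces the problem to the degrees $\mathbf{e}_U$ with $|U|=d$.

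For such a $U$, the basis of the strand in homological degree $i$ consists of the symbols $e_S\otimes x^{U\setminus S}$ with $S\subseteq U$, $|S|=i$, and $U\setminus S\in\Delta$; reindexing by the face $F=U\setminus S\in\Delta_U$ (so that $\dim F=d-i-1$) identifies the strand, up to signs and a shift, with the reduced cochain complex of the induced subcomplex $\Delta_U$. Indeed, the Koszul differential multiplies by some $x_j$ and drops $j$ from $S$, which corresponds exactly to adjoining $j$ to the face $F$, that is, to the simplicial coboundary map. Hence $\operatorname{Tor}_i^R(k[\Delta],k)_{\mathbf{e}_U}\cong\widetilde{H}^{\,d-i-1}(\Delta_U;k)\cong\widetilde{H}_{d-i-1}(\Delta_U;k)$, the last isomorphism holding because we work over a field. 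Summing over all $U$ with $|U|=d$ then gives $\beta_{i,d}(k[\Delta])=\sum_{|U|=d}\dim_k\widetilde{H}_{d-i-1}(\Delta_U;k)$.

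Finally I would pass from $k[\Delta]$ to the ideal $I_\Delta$. The short exact sequence $0\to I_\Delta\to R\to k[\Delta]\to 0$ together with the freeness of $R$ yields isomorphisms $\operatorname{Tor}_i^R(I_\Delta,k)\cong\operatorname{Tor}_{i+1}^R(k[\Delta],k)$ in every positive degree, so that $\beta_{i,d}(I_\Delta)=\beta_{i+1,d}(k[\Delta])$; substituting $i+1$ for $i$ in the formula above turns the homological index $d-(i+1)-1$ into $d-i-2$ and produces the stated equation. The main obstacle I anticipate is the identification in the third step: matching the Koszul differential with the simplicial coboundary map with all signs correct, and cleanly disposing of the non-squarefree multidegrees. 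Once that bookkeeping is settled, the reduction and the summation are purely formal.
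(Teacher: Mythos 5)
Your proposal is correct, but note that the paper does not prove this proposition at all: it is quoted as a known result with a citation to Herzog--Hibi (Theorem 8.1.1 of \emph{Monomial Ideals}), so there is no internal argument to compare against. What you have written is essentially the standard textbook proof of Hochster's formula: pass to the fine $\mathds{Z}^n$-grading, compute $\operatorname{Tor}_i^R(k[\Delta],k)_{\mathbf a}$ from the $\mathbf a$-strand of the Koszul complex, kill the non-squarefree strands by a contracting homotopy, identify the strand in degree $\mathbf e_U$ with the (shifted) reduced cochain complex of $\Delta_U$ via $S\mapsto U\setminus S$, and then shift indices using $\operatorname{Tor}_i^R(I_\Delta,k)\cong\operatorname{Tor}_{i+1}^R(k[\Delta],k)$, which is exactly how the paper itself converts statements about $k[\Delta]$ into statements about $I_\Delta$ in Section 2. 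Your index bookkeeping checks out: homological degree $i$ in the $\mathbf e_U$-strand corresponds to faces $F=U\setminus S$ of dimension $d-i-1$, giving $\widetilde H^{\,d-i-1}(\Delta_U;k)$ for the quotient ring and hence $\widetilde H_{d-i-2}(\Delta_U;k)$ for the ideal after the shift, with the identification of cohomology and homology dimensions valid because $k$ is a field. The two points you flag as remaining work (the sign-correct matching of the Koszul differential with the simplicial coboundary, and the vanishing in non-squarefree degrees) are indeed the only places requiring care, and both are routine; so the outline is sound and complete at the level of detail one would expect for a result the paper treats as standard.
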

	
	Here, $\Delta_U$ denotes the induced subcomplex of $\Delta$ consisting of all faces contained in $U$, and $\widetilde{H}_j(\Delta_U)$ is the $j^\text{th}$ reduced homology group of $\Delta_U$ with coefficients in $k$.
	
	\begin{lem}[Herzog-K\"{u}hl Equations]\label{HK}
		Let $M$ be a graded $R$-module of codimension $c$, and let $\beta=\beta(M)$. For any integers $i$ and $d$, we have
		\begin{equation*}
			\sum_{i,d}(-1)^i d^j \beta_{i,d} = 0 \text{ for } j=0,...,c-1\, .
		\end{equation*}
	\end{lem}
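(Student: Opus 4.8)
The plan is to derive these identities from the behaviour of the Hilbert series of $M$ near $t=1$. Writing down a minimal graded free resolution
\[
0 \to \bigoplus_d R(-d)^{\beta_{p,d}} \to \cdots \to \bigoplus_d R(-d)^{\beta_{0,d}} \to M \to 0,
\]
with $p=\pd M$, and using additivity of the Hilbert series along the resolution together with $H_{R(-d)}(t)=t^d/(1-t)^n$, one obtains
\[
H_M(t) = \frac{K(t)}{(1-t)^n}, \qquad K(t) = \sum_{i,d}(-1)^i \beta_{i,d}\, t^d,
\]
where $K(t)$ is the alternating sum of the graded Betti numbers (the $K$-polynomial). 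So the whole problem reduces to understanding the order of vanishing of $K(t)$ at $t=1$.

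The key structural input is the standard fact that the order of the pole of $H_M(t)$ at $t=1$ equals the Krull dimension $\dim M = n-c$. Since the denominator $(1-t)^n$ contributes a pole of order $n$, the numerator $K(t)$ must cancel exactly $c$ of these factors; that is, $(1-t)^c$ divides $K(t)$, so $K(t)$ vanishes to order at least $c$ at $t=1$. Equivalently, the ordinary derivatives satisfy $K^{(0)}(1)=K^{(1)}(1)=\cdots=K^{(c-1)}(1)=0$.

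It remains to convert this vanishing into the stated power-sum identities. I would introduce the Euler operator $\theta = t\,\frac{d}{dt}$, which acts on monomials by $\theta\, t^d = d\, t^d$, hence $\theta^j t^d = d^j t^d$ and
\[
\theta^j K(t)\big|_{t=1} = \sum_{i,d}(-1)^i d^j \beta_{i,d}.
\]
On the other hand, each $\theta^j$ expands as $\sum_{k=0}^{j} S(j,k)\, t^k \frac{d^k}{dt^k}$, where the $S(j,k)$ are the Stirling numbers of the second kind, so evaluating at $t=1$ gives $\theta^j K(t)\big|_{t=1} = \sum_{k=0}^{j} S(j,k)\, K^{(k)}(1)$. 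For each $j \le c-1$ every term $K^{(k)}(1)$ with $k \le j$ vanishes by the previous paragraph, and the desired identity follows.

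The main obstacle is entirely the first structural step: establishing that $(1-t)^c \mid K(t)$, that is, tying the algebraic notion of codimension to the pole order of the Hilbert series at $t=1$. Once that is in hand, the passage to the Herzog–K\"{u}hl equations is a purely formal manipulation with the Euler operator, and no genuine difficulty remains.
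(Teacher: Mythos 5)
Your argument is correct, and it is the standard proof of the Herzog--K\"{u}hl equations. Note, however, that the paper does not prove this lemma at all: it simply cites Fl\o{}ystad's survey (Section 1.3), where essentially your argument appears. Your chain of reasoning is sound: additivity of Hilbert series along the minimal free resolution gives $H_M(t)=K(t)/(1-t)^n$ with $K(t)=\sum_{i,d}(-1)^i\beta_{i,d}t^d$; the Hilbert--Serre theorem (pole order of $H_M$ at $t=1$ equals $\dim M=n-c$) forces $(1-t)^c\mid K(t)$, hence $K^{(k)}(1)=0$ for $k=0,\dots,c-1$; and the Euler operator identity $\theta^j=\sum_{k=0}^{j}S(j,k)\,t^k\frac{d^k}{dt^k}$ converts this into the vanishing of $\sum_{i,d}(-1)^id^j\beta_{i,d}=\theta^jK(t)\big|_{t=1}$ for $j\le c-1$. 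The one external input you rely on, the identification of the pole order with the Krull dimension, is indeed the crux, but it is a genuinely standard fact (e.g.\ Bruns--Herzog, Corollary 4.1.8 in the paper's reference \cite{C-M}), so flagging it rather than reproving it is reasonable. One cosmetic remark: you should note that $K(t)$ is a (Laurent) polynomial because the resolution is finite by Hilbert's syzygy theorem, so that ``vanishing to order $c$ at $t=1$'' and the derivative manipulations are purely formal; this costs nothing but makes the last step airtight.
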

	
	For convenience, for all $j=0,...,n$, we use the notation $HK_j(\beta)$ to represent the expression $\sum_{i,d}(-1)^i d^j \beta_{i,d}$. In this notation, the Herzog-K\"{u}hl equations for modules of codimension $c$ can be rephrased as the statement $HK_j(\beta)=0$ for each $j=0,...,c-1$.
	
	Recall that if
	
	\begin{equation*}\label{res-IDelta}
		0\rightarrow F_p \rightarrow \dotsb \rightarrow F_1 \rightarrow F_0
	\end{equation*}
	is a minimal graded free resolution of $I_\Delta$, then
	
	\begin{equation*}\label{res-kDelta}
		0\rightarrow F_p \rightarrow \dotsb \rightarrow F_1 \rightarrow F_0 \rightarrow R
	\end{equation*}
	is a minimal graded free resolution of $k[\Delta]$.
	
	This means we have $\beta_{0,0}(k[\Delta])=1$, and also $\beta_{i,d}(k[\Delta])=\beta_{i-1,d}(I_\Delta)$ for any integers $i$ and $d$ with $i>0$. Hence, for each $j>0$, we have
	\begin{align*}
		HK_j(\beta(k[\Delta])) &= \sum_{i,d}(-1)^i d^j \beta_{i,d}(k[\Delta])\\
		&= \sum_{i,d}(-1)^i d^j \beta_{i-1,d}(I_\Delta)\\
		&= - HK_j(\beta(I_\Delta))\, .
	\end{align*}

	The ideal $I_\Delta$ has height $h$ if and only if $k[\Delta]$ has codimension $h$. So if $I_\Delta$ has height $h$, then, by the Herzog-K\"{u}hl equations, we have $HK_j(\beta(I_\Delta))=0$ for each $j=1,...,h-1$.
	
	\subsection{Betti Diagrams of Specific Families of Graphs}\label{sec: Betti diagrams}
	In this section we present the Betti numbers of some specific families of graphs, along with an important proposition (Proposition \ref{beta-L}) that helps us construct Betti diagrams of slightly more complicated graphs.
	
	In what follows, we let $K_m$ denote the complete graph on $m$ vertices, $E_m$ denote the empty graph on $m$ vertices, and $C_m$ denote the cyclic graph on $m$ vertices. We also write $L$ for the graph consisting of a single edge, and $G+L$ for the disjoint union of $G$ and $L$.
	
	\begin{prop}\label{beta-iso}
		Let $G^*$ denote the graph obtained by removing all isolated vertices from $G$. The diagram $\beta(G)$ is equal to $\beta(G^*)$. In particular, we have $\beta(E_m)=0$ for any non-negative integer $m$.
	\end{prop}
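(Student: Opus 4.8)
The plan is to show that removing isolated vertices from $G$ leaves the edge ideal unchanged as an ideal (up to the ambient ring), and then argue that the Betti numbers are insensitive to the presence of the extra polynomial variables. First I would observe that the generators of $I(G)$ are exactly the monomials $x_ix_j$ for edges $\{i,j\}\in E(G)$, and that an isolated vertex contributes no edge and hence no generator. Thus, as a set of generators, $I(G)$ and $I(G^*)$ are literally the same monomials; the only difference is that $I(G)$ lives in $R=k[x_1,\dots,x_n]$ while $I(G^*)$ lives in a polynomial subring $R'$ on the non-isolated vertices.

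The key step is then to verify that adjoining variables on which the generators do not depend does not change the graded Betti numbers. I would do this by taking a minimal graded free resolution $F_\bullet$ of $I(G^*)$ over $R'$ and tensoring with $R$ over $R'$, i.e. forming $F_\bullet\otimes_{R'}R$. Since $R$ is a free (indeed polynomial) extension of $R'$, this functor is exact and preserves minimality of the differentials (the entries of the differential maps, being in the maximal ideal of $R'$, remain in the maximal ideal of $R$), so $F_\bullet\otimes_{R'}R$ is a minimal graded free resolution of $I(G^*)\otimes_{R'}R = I(G)$ over $R$. Because tensoring with a free module does not alter the graded ranks of the free modules in the resolution, we get $\beta_{i,d}(I(G))=\beta_{i,d}(I(G^*))$ for all $i,d$, which is exactly $\beta(G)=\beta(G^*)$.

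Alternatively, and perhaps more in keeping with the combinatorial framework already set up, I could argue via Hochster's Formula (Proposition \ref{hoc}). An isolated vertex $v$ of $G$ is adjacent to no other vertex, so in the independence complex $\Delta(G)$ it is a cone point: $v$ lies in every maximal face, and $\Delta(G)$ is the cone $v * \Delta(G^*)$. For any subset $U\subseteq[n]$, the induced subcomplex $\Delta(G)_U$ is either a cone (when $v\in U$), hence contractible with vanishing reduced homology, or equals $\Delta(G^*)_U$ (when $v\notin U$). Summing $\dim_k\widetilde{H}_{d-i-2}(\Delta(G)_U)$ over all $U$ with $|U|=d$ therefore only picks up contributions from the subsets avoiding $v$, which recovers exactly the Hochster sum for $\Delta(G^*)$. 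Iterating over all isolated vertices gives $\beta(G)=\beta(G^*)$.

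The final claim, $\beta(E_m)=0$, follows immediately: the empty graph has no edges, so $E_m^*$ is the empty graph on zero vertices, whose edge ideal is the zero ideal, and the zero module has all Betti numbers zero. I expect the main obstacle to be stating the ring-extension argument cleanly enough that the preservation of minimality and of the graded ranks is transparent; the cone-point homology argument is conceptually simple but requires care in handling the two cases for $U$ and in confirming that a cone indeed has trivial reduced homology in every degree.
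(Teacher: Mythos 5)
Your proposal is correct, and in fact more careful than the paper, whose entire proof is the one-line observation that $I(G)=I(G^*)$ (plus $E_m^*=\emptyset$ for the second claim). Your first argument is essentially that observation made rigorous: you identify the one genuine subtlety the paper elides, namely that $I(G^*)$ naturally lives in a smaller polynomial ring, and you resolve it by the standard flat base change $F_\bullet\otimes_{R'}R$, checking exactness, preservation of minimality, and preservation of graded ranks. (One could also sidestep the issue entirely by regarding $G^*$ as a graph on all of $[n]$ with some vertices unused, in which case the two ideals are literally identical in $R$; this is presumably the paper's implicit convention.) Your second, Hochster-based argument is a genuinely different route: the cone-point observation that an isolated vertex makes every induced subcomplex containing it contractible, so only subsets avoiding the isolated vertices contribute to the Hochster sum. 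That version buys nothing extra here, but it fits the combinatorial machinery the paper sets up (and the same cone/suspension style of reasoning reappears in Proposition~\ref{beta-L}), so it is a reasonable alternative. Both arguments, and your deduction of $\beta(E_m)=0$, are sound.
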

	\begin{proof}
		The first part of this is immediate from the fact that $I(G)=I(G^*)$. The second part follows immediately from the fact that $E_m^*=\emptyset$.
	\end{proof}
	
	\begin{prop}\label{beta-Kn}
		Let $m$ be a non-negative integer. For any integers $i$ and $d$, we have
		\begin{equation*}
			\beta_{i,d}(K_m)=
			\begin{cases}
				(i+1){m\choose i+2} & \text{if } i=0,...,m-2 \text{ and } d=i+2\\
				0 & \text{otherwise.}
			\end{cases}
		\end{equation*}
		In particular, the diagram $\beta(K_m)$ has the following shape.
		\begin{equation*}
			\begin{bmatrix}
				\beta_{0,2} & \dots & \beta_{m-2,m}\\
			\end{bmatrix}
		\end{equation*}
	\end{prop}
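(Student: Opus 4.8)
The plan is to apply Hochster's Formula (Proposition \ref{hoc}) directly, after identifying the independence complex of $K_m$ and its induced subcomplexes. First I would invoke Proposition \ref{beta-iso} to reduce to the case where $K_m$ has no isolated vertices, so that we may work over the vertex set $[m]$ without introducing contractible cone points. Since every pair of distinct vertices is adjacent in $K_m$, the only independent sets are the empty set and the singletons; hence the independence complex $\Delta(K_m)$ is the disjoint union of $m$ points (the $0$-skeleton on $[m]$).

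Next I would record the induced subcomplexes. For any $U\subseteq[m]$ with $|U|=d$, the induced subcomplex $\Delta(K_m)_U$ consists of exactly the $d$ points indexed by $U$, so it is again a disjoint union of $d$ points. The reduced homology of a disjoint union of $d\ge 1$ points is $\widetilde{H}_0\cong k^{d-1}$, with $\widetilde{H}_j=0$ for all $j\neq 0$; the empty complex ($d=0$) contributes only $\widetilde{H}_{-1}\cong k$. Thus $\dim_k\widetilde{H}_{d-i-2}(\Delta(K_m)_U)$ is nonzero precisely when $d\ge 1$ and $d-i-2=0$, in which case it equals $d-1=i+1$.

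Substituting into Hochster's Formula, every summand vanishes unless $d=i+2$, which immediately forces all off-diagonal entries of $\beta(K_m)$ to be zero. On the diagonal $d=i+2$ I would then compute
\begin{equation*}
	\beta_{i,i+2}(K_m)=\sum_{|U|=i+2}\dim_k\widetilde{H}_0\bigl(\Delta(K_m)_U\bigr)=\binom{m}{i+2}(i+1),
\end{equation*}
since there are $\binom{m}{i+2}$ subsets $U$ of size $i+2$, each contributing $i+1$. Finally, the binomial coefficient $\binom{m}{i+2}$ is nonzero exactly when $0\le i\le m-2$, which pins down the stated range and the claimed shape of the diagram.

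I expect the computation itself to be routine rather than the site of any real obstacle; the only points requiring care are the bookkeeping of reduced homology in the degenerate low-dimensional cases (in particular the $\widetilde{H}_{-1}$ contribution of the empty complex, which never lands in a relevant degree here) and the clean reduction to vertex set $[m]$ via Proposition \ref{beta-iso}, so that isolated vertices do not corrupt the homology of the induced subcomplexes.
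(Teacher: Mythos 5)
Your proof is correct, but it takes a genuinely different route from the paper. The paper disposes of this proposition by citation: it quotes Theorem 5.1.1 of Jacques' thesis \cite{Jacques} (which gives the Betti numbers of $k[\Delta(K_m)]$) and then applies the index shift $\beta_{i,d}(k[\Delta])=\beta_{i-1,d}(I_\Delta)$ to pass from the Stanley--Reisner ring to the ideal. You instead give a self-contained computation directly from Hochster's Formula (Proposition \ref{hoc}): the independence complex of $K_m$ is the $0$-skeleton on $[m]$, every induced subcomplex on $d$ vertices is $d$ discrete points with $\widetilde{H}_0\cong k^{d-1}$ and no other reduced homology, so only the degree $d=i+2$ survives and each of the $\binom{m}{i+2}$ subsets contributes $i+1$. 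Your bookkeeping is right, including the observation that the $\widetilde{H}_{-1}$ contribution of the empty subcomplex would require $i=-1$ and so never enters, and that the per-subset contribution $d-1=i+1$ vanishes anyway at $i=-1$, so the stated range $0\le i\le m-2$ falls out of the vanishing of $(i+1)\binom{m}{i+2}$. What your approach buys is independence from the external reference and a template that the paper itself reuses for similar computations (compare the proofs of Propositions \ref{beta-L} and \ref{beta-adding-Em}); what the paper's citation buys is brevity. The one cosmetic quibble is that your appeal to Proposition \ref{beta-iso} is not really needed to ``reduce to vertex set $[m]$'' --- $K_m$ is already defined on $m$ vertices --- though it is the right tool if one insists on viewing $K_m$ as sitting inside vertex set $[n]$, since induced subcomplexes containing an added isolated graph-vertex are cones and contribute nothing to Hochster's sum.
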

	\begin{proof}
		This is from Theorem 5.1.1 in \cite{Jacques}, and the fact that for any integers $i$ and $d$ with $i>0$, we have $\beta_{i,d}(k[\Delta])=\beta_{i-1,d}(I_\Delta)$.
	\end{proof}
	
	\begin{prop}\label{beta-Cn}
		Let $m$ be a non-negative integer, and suppose the complement $G^c$ of $G$ is equal to $C_m$. For any integers $i$ and $d$, we have
		\begin{equation*}
			\beta_{i,d}(G)=
			\begin{cases}
				\frac{m(i+1)}{m-i-2}{m-2\choose i+2}& \text{if } i=0,...,m-4 \text{ and } d=i+2\\
				1 & \text{if } (i,d) = (m-3,m) \\
				0 & \text{otherwise.}
			\end{cases}
		\end{equation*}
		In particular, the diagram $\beta((C_m)^c)$ has the following shape.
		\begin{equation*}
			\begin{bmatrix}
				\beta_{0,2} & \dots & \beta_{m-4,m-2} & \\
				& & &\beta_{m-3,m}\\ 
			\end{bmatrix}\, .
		\end{equation*}
	\end{prop}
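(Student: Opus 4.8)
The plan is to use the clique-complex description of $\Delta(G)$ together with Hochster's Formula (Proposition~\ref{hoc}). Writing $\Delta = \Delta(G)$, recall that $\Delta$ is the clique complex of $G^c = C_m$. For $m \geq 4$ the cycle $C_m$ contains no triangles, so its only cliques are its vertices and edges; hence $\Delta$ is the one-dimensional complex given by the cycle itself, which is homotopy equivalent to $S^1$. (The cases $m \leq 3$ are degenerate and can be checked separately by hand.) The key observation is that for any proper subset $U \subsetneq [m]$, the induced subgraph $C_m[U]$ is a disjoint union of paths, so the induced subcomplex $\Delta_U$ is a disjoint union of contractible complexes; thus $\widetilde{H}_j(\Delta_U) = 0$ for all $j \geq 1$, while $\dim_k \widetilde{H}_0(\Delta_U) = c(U) - 1$, where $c(U)$ denotes the number of connected components of $C_m[U]$.

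Feeding this into Hochster's Formula, I would split the computation according to whether $d = m$ or $d < m$. When $d = m$ the only subset is $U = [m]$, with $\Delta_U = \Delta \simeq S^1$; since $\widetilde{H}_{m-i-2}(S^1)$ is nonzero (and one-dimensional) exactly when $m - i - 2 = 1$, this yields $\beta_{m-3,m} = 1$ and $\beta_{i,m} = 0$ otherwise. When $d < m$ every $U$ with $|U| = d$ is proper, so only the degree-zero homology contributes; this forces $d - i - 2 = 0$, i.e.\ $d = i+2$, and gives
\begin{equation*}
\beta_{i,i+2} = \sum_{|U| = i+2} \bigl(c(U) - 1\bigr) = \Bigl(\sum_{|U|=i+2} c(U)\Bigr) - \binom{m}{i+2}.
\end{equation*}
All remaining Betti numbers vanish, which already accounts for the zeros in the statement.

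It then remains to evaluate $\sum_{|U|=d} c(U)$ for $d = i + 2 < m$. I would do this by double counting: since $U$ is proper, each connected component of $C_m[U]$ has a unique first vertex $v$ (one with $v \in U$ but $v-1 \notin U$ in the cyclic order), so $c(U)$ equals the number of such $v$. Summing over all $U$ and interchanging the order of summation, each vertex $v$ is the first vertex of a component in exactly $\binom{m-2}{d-1}$ of the $d$-subsets (choose the remaining $d-1$ elements from the $m-2$ vertices other than $v$ and $v-1$), giving $\sum_{|U|=d} c(U) = m\binom{m-2}{d-1}$. Substituting $d = i+2$ leaves the closed form
\begin{equation*}
\beta_{i,i+2} = m\binom{m-2}{i+1} - \binom{m}{i+2},
\end{equation*}
and a routine algebraic simplification rewrites this as $\frac{m(i+1)}{m-i-2}\binom{m-2}{i+2}$, completing the proof.

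The conceptual heart of the argument is the topological identification of the induced subcomplexes $\Delta_U$; once one sees that proper induced subgraphs of a cycle are disjoint unions of paths, the homology is forced and Hochster's Formula reduces everything to combinatorics. I expect the main obstacle to be purely bookkeeping: carefully handling the boundary case $d = m$ (so that the single class in $\widetilde{H}_1(S^1)$ produces the isolated entry $\beta_{m-3,m}=1$ rather than being absorbed into the $d = i+2$ family) and verifying the final binomial identity, which is elementary but requires attention to the factors of $(m-d)$ and $(d-1)$.
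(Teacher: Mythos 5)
Your proof is correct, but it is genuinely different from the paper's: the paper gives no argument at all for this proposition, simply citing Theorem 2.3.3 of Fern\'andez Ramos's thesis, whereas you supply a complete, self-contained derivation from Hochster's Formula. Your key observations all check out: the clique complex of $C_m$ (for $m\geq 4$) is the cycle itself, homotopy equivalent to $S^1$, which isolates the entry $\beta_{m-3,m}=1$ at $d=m$; every proper induced subgraph of $C_m$ is a disjoint union of paths, so only $\widetilde{H}_0$ contributes when $d<m$, forcing $d=i+2$; and the ``first vertex'' double count correctly gives $\sum_{|U|=d}c(U)=m\binom{m-2}{d-1}$ (note that $v-1\notin U$ automatically makes $U$ proper, so the count is consistent). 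The resulting expression $m\binom{m-2}{i+1}-\binom{m}{i+2}$ does simplify to $\frac{m(i+1)}{m-i-2}\binom{m-2}{i+2}$ via the identity $(i+2)(m-i-2)-(m-1)=(i+1)(m-i-3)$, and it correctly returns $0$ at $i=m-3$, matching the statement. What your approach buys is transparency and independence from an unpublished thesis; it also makes visible \emph{why} the diagram has exactly two rows (contractibility of proper induced subcomplexes versus the single $S^1$ class). One small caution: your deferral of ``$m\leq 3$'' to a hand check is wise, since for $m=3$ the clique complex of $C_3=K_3$ is a full $2$-simplex and $\beta((C_3)^c)=\beta(E_3)=0$, so the isolated entry $\beta_{0,3}=1$ claimed by the literal statement does not occur there; the proposition should be read as applying to $m\geq 4$.
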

	\begin{proof}
		See Theorem 2.3.3 in \cite{Ramos}.
	\end{proof}
	
	We end with a particularly important proposition that allows us to find the Betti diagram of the suspension $S\Delta$ of $\Delta$ from the Betti diagram of $\Delta$.
	
	Note that the independence complex of $G+L$ is equal to the suspension of the independence complex of $G$, so this proposition also allows us to find the Betti Diagram of $G+L$ from the Betti diagram of $G$.
	
	\begin{prop}\label{beta-L}
		For any integers $i$ and $d$, we have $\beta_{i,d}(S\Delta)=\beta_{i,d}(\Delta)+\beta_{i-1,d-2}(\Delta)$. In particular, we have $\beta_{i,d}(G+L)=\beta_{i,d}(G)+\beta_{i-1,d-2}(G)$.
	\end{prop}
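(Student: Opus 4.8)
The plan is to compute both sides using Hochster's Formula (Proposition~\ref{hoc}) and to reduce everything to a statement about the reduced homology of a suspension. Write $S\Delta=\Delta*\{a,b\}$, where $a$ and $b$ are the two new vertices adjoined in forming the suspension, so that $S\Delta$ lives on the vertex set $[n]\cup\{a,b\}$ and its faces are exactly the sets $F$, $F\cup\{a\}$, and $F\cup\{b\}$ with $F\in\Delta$. Applying Hochster's Formula to $I_{S\Delta}$ gives
\begin{equation*}
\beta_{i,d}(S\Delta)=\sum_{|U|=d}\dim_k\widetilde{H}_{d-i-2}\bigl((S\Delta)_U\bigr),
\end{equation*}
where $U$ now ranges over subsets of $[n]\cup\{a,b\}$. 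The first step is to sort these subsets $U$ according to how they meet $\{a,b\}$.

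Next I would analyse the induced subcomplex $(S\Delta)_U$ in each of the resulting cases. If $U\subseteq[n]$ then $(S\Delta)_U=\Delta_U$. If $U$ contains exactly one of $a,b$, say $U=U'\cup\{a\}$ with $U'\subseteq[n]$, then every vertex of $U'$ is joined to $a$ in $S\Delta$, so $(S\Delta)_U$ is the cone $a*\Delta_{U'}$; being a cone it is contractible and contributes nothing to reduced homology. The remaining case is $U=U'\cup\{a,b\}$ with $U'\subseteq[n]$, where one checks directly from the description of the faces of $S\Delta$ that $(S\Delta)_U=S(\Delta_{U'})$, the suspension of the induced subcomplex $\Delta_{U'}$.

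The key geometric input is then the suspension isomorphism in reduced homology, $\widetilde{H}_j(SX)\cong\widetilde{H}_{j-1}(X)$, which I would apply with $X=\Delta_{U'}$ and $j=d-i-2$. Summing over the cases, the subsets contained in $[n]$ reassemble, via Hochster's Formula, into $\beta_{i,d}(\Delta)$; the mixed subsets vanish; and the subsets containing both $a$ and $b$ contribute $\sum_{|U'|=d-2}\dim_k\widetilde{H}_{d-i-3}(\Delta_{U'})$, which is exactly $\beta_{i-1,d-2}(\Delta)$ by another application of Hochster's Formula (since $(d-2)-(i-1)-2=d-i-3$). This yields $\beta_{i,d}(S\Delta)=\beta_{i,d}(\Delta)+\beta_{i-1,d-2}(\Delta)$. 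The ``in particular'' statement is then immediate from the observation, noted before the proposition, that $\Delta(G+L)=S\Delta(G)$.

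I expect the main obstacle to be the careful bookkeeping of the homological degrees, and in particular handling the degenerate boundary cases (such as $U'=\emptyset$, where $\Delta_{U'}$ is empty and $(S\Delta)_{\{a,b\}}$ is a pair of points) so that the suspension isomorphism and the $\widetilde{H}_{-1}$ conventions are applied consistently.
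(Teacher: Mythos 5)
Your proposal is correct and follows essentially the same route as the paper, which proves the proposition in one line by combining Hochster's Formula with the suspension isomorphism $\widetilde{H}_j(SX)\cong\widetilde{H}_{j-1}(X)$; your case analysis of how $U$ meets the two suspension vertices (with the cone cases contributing nothing) is exactly the bookkeeping that one-line proof leaves implicit. The degenerate case $U'=\emptyset$ you flag is handled by the standard convention $\widetilde{H}_{-1}(\{\emptyset\})=k$, so no further work is needed.
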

	
	\begin{proof}
		This follows from Hochster's Formula, using the fact that for any topological space $X$ and any $i\geq0$, we have $\widetilde{H}_{i}(SX)\cong\widetilde{H}_{i-1}(X)$.
	\end{proof} 
	
\section{Indexing Sets and Linear Independence}\label{sec: index}
	In the following sections, we establish formulae for the dimensions of the cones $\mathcal{C}_n$ and $\mathcal{C}_n^h$. Our proofs proceed by showing that the formulae given are both upper and lower bounds for the dimensions of the cones.
	
	To find a lower bound $l$ for the dimension of a convex cone $\mathcal{C}$, it suffices to find a linearly independent set of $l$ vectors lying in $\mathcal{C}$, as this shows that the smallest vector space containing $\mathcal{C}$ must have dimension at least $l$. In this section, for ease of explanation, we present terminology for a simple condition that ensures linear independence.
	
	Suppose $\mathcal{C}$ lives inside the rational vector space $V=\bigoplus_{i\in I} \mathds{Q}$ for some finite indexing set $I$. For a vector $v$ in $V$ and an index $i\in I$, let $v_i$ denote the $i^\text{th}$ coordinate of $v$. Also suppose we have a strict total ordering $\prec$ on $I$.
	
	\begin{defn}\label{def: initial}
		Let $v\in V$ and $i\in I$. We say $v$ is \textit{$i$-initial with respect to $\prec$} (which we often write as \textit{$i_\prec$-initial}, or just \textit{$i$-initial} when doing so does not result in ambiguity) if
		\begin{enumerate}
			\item The component $v_i$ is non-zero;
			\item For every $j\in I$ such that $i\prec j$, the component $v_j$ equals zero.
		\end{enumerate}
	\end{defn}
	
	If $X=\{v^i\}_{i\in I}$ is a set of vectors lying in $\mathcal{C}$ such that for each $i\in I$, $v^i$ is $i_\prec$-initial, then $X$ must be linearly independent. So to find a linearly independent set of vectors in $\mathcal{C}$, it suffices to define an order $\prec$ on $I$, and find an $i_\prec$-initial vector lying in $\mathcal{C}$ for each $i$ in $I$.
	
\section{Dimension of $C_n$}
	In this section, we prove Theorem \ref{dimCn}.
	
	\subsection{Upper Bound}\label{sec: ub Cn}
	We start by bounding the dimension from above. Every Betti diagram lives in the infinite dimensional vector space $V_n = \bigoplus_{d\in \mathds{Z}} \mathds{Q}^{n+1}$. We work towards finding a finite indexing set $S_n\subset \{0,...,n\}\times\mathds{Z}$ such that for every $\beta\in \mathcal{C}_n$, and every integer $d$ and $i=0,...,d$ with $(i,d)\notin S_n$, we have $\beta_{i,d}=0$. This demonstrates that $\mathcal{C}_n$ actually lies inside the finite-dimensional vector space $W_n = \bigoplus_{(i,d)\in S_n}\mathds{Q}$, and hence we have $\dim \mathcal{C}_n\leq \dim W_n = |S_n|$.
	
	To find our indexing set $S_n$, we need to obtain some restrictions on the positions of the non-zero values of the diagrams in $\mathcal{C}_n$.
	
	\begin{prop}\label{inequalities}
		Consider $\beta \in \mathcal{C}_n$.  For
		any non-negative integer $i$ less than or equal to $n$ and any integer $d$
		satisfying $d < i+2$, $d > n$, or $d > 2i +2$, we have $\beta_{i,d} = 0$.
	\end{prop}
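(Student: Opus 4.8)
The plan is to prove the vanishing for each generator of the cone and then invoke linearity. Since $\mathcal{C}_n$ is generated by the diagrams $\beta(I(G))$ and each map $\beta\mapsto\beta_{i,d}$ is a coordinate functional, it suffices to show $\beta_{i,d}(I(G))=0$ for every graph $G$ on $[n]$ and every $(i,d)$ in the stated ranges. Fixing such a $G$, I would apply Hochster's Formula (Proposition \ref{hoc}) to write $\beta_{i,d}(I(G))=\sum_{|U|=d}\dim_k\widetilde{H}_{d-i-2}(\Delta(G)_U)$, and use that the induced subcomplex $\Delta(G)_U$ is exactly the independence complex $\Delta(G[U])$ of the induced subgraph, equivalently the clique complex of its complement, hence a flag complex on the $d$ vertices of $U$. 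Writing $p=d-i-2$ for the homological degree appearing in Hochster's Formula, the three hypotheses translate into: (i) no index set $U$ exists; (ii) $p\le-1$; and (iii) $p\ge i+1$, i.e.\ $d\le 2p+1$.

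The cases $d>n$ and $d<i+2$ are routine. If $d>n$ there is no $U\subseteq[n]$ with $|U|=d$, so the Hochster sum is empty. If $d<i+2$ then $p\le-1$; reduced homology vanishes in degrees $\le-2$ automatically, while in degree $-1$ the group $\widetilde{H}_{-1}(\Delta(G)_U)$ is nonzero only when $\Delta(G)_U=\{\emptyset\}$. But for $i\ge 0$ the only relevant case is $d=i+1\ge 1$, so $U\neq\emptyset$; since every single vertex is an independent set, each vertex of $U$ is a face of $\Delta(G)_U$, forcing $\Delta(G)_U\neq\{\emptyset\}$. Hence every summand vanishes.

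The substantive case is $d>2i+2$, which reduces to the following purely topological claim, the crux of the argument: if $\Delta$ is a flag complex on $d$ vertices and $\widetilde{H}_p(\Delta;k)\neq 0$, then $d\ge 2p+2$. Granting this, when $d>2i+2$ we have $p=d-i-2\ge i+1\ge 1$ and $d\le 2p+1<2p+2$, so $\widetilde{H}_p(\Delta(G[U]))=0$ for every $U$ and the Hochster sum vanishes. To prove the claim I would induct on the number of vertices $d$. The base cases $d\le 1$ and the case $p=0$ (where $\widetilde{H}_0\neq 0$ means at least two components, forcing $d\ge 2$) are immediate. For $p\ge 1$, pick a vertex $v$ and use the reduced Mayer--Vietoris sequence for the decomposition $\Delta=(\Delta\setminus v)\cup\overline{\mathrm{st}}(v)$, whose pieces meet in $\mathrm{lk}(v)$ and whose closed star is contractible; this yields the exact strand $\widetilde{H}_p(\Delta\setminus v)\to\widetilde{H}_p(\Delta)\to\widetilde{H}_{p-1}(\mathrm{lk}(v))$. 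Thus $\widetilde{H}_p(\Delta)\neq 0$ forces either $\widetilde{H}_p(\Delta\setminus v)\neq 0$ or $\widetilde{H}_{p-1}(\mathrm{lk}(v))\neq 0$. In the first case $\Delta\setminus v$ is flag on $d-1$ vertices and induction gives $d-1\ge 2p+2$. In the second, the flag property makes $\mathrm{lk}(v)$ the induced complex on the neighbourhood $N(v)$, again flag, on $|N(v)|<d$ vertices, so induction gives $|N(v)|\ge 2p$.

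The one delicate point, where the bound is nearly lost by a single vertex, is closing this second case: a priori $|N(v)|\ge 2p$ only yields $d\ge 2p+1$. The fix is to observe that $v$ cannot be adjacent to all other vertices, for otherwise the flag property would make $\Delta$ a cone with apex $v$, hence contractible, contradicting $\widetilde{H}_p(\Delta)\neq 0$. Therefore some vertex lies outside $N(v)\cup\{v\}$, and counting the $\ge 2p$ neighbours together with $v$ and this extra vertex gives $d\ge 2p+2$. This cone observation is the step I expect to require the most care. (Alternatively, the bound $d\le 2i+2$ for generators can be read off the Taylor complex, since a nonzero $\beta_{i,d}(I(G))$ requires $i+1$ edges whose least common multiple has degree $d$, and the union of $i+1$ edges spans at most $2(i+1)$ vertices; I would still present the Hochster argument, as it keeps everything within the paper's toolkit.)
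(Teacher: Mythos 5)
Your proof is correct, and it is genuinely more self-contained than what the paper does: the paper offers no argument at all for this proposition, remarking only that the first two vanishing conditions are simple corollaries of Hochster's Formula and outsourcing the substantive third condition ($d>2i+2$) to Lemma~2.2 of Katzman's paper \cite{Katz}. You instead prove that third condition from scratch, reducing it via Hochster's Formula to the purely topological statement that a flag complex with nonvanishing $\widetilde{H}_p$ needs at least $2p+2$ vertices, and establishing that by a Mayer--Vietoris induction on the vertex decomposition $\Delta=(\Delta\setminus v)\cup\overline{\mathrm{st}}(v)$. The details all check out, including the two points that actually require care: the identification $\mathrm{lk}(v)=\Delta_{N(v)}$ (which uses flagness and gives a complex on exactly $|N(v)|$ vertices, since each neighbour is a face of the link), and the recovery of the lost vertex in the link case via the observation that a flag complex in which $v$ is adjacent to every other vertex is a cone, hence has no reduced homology. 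Your parenthetical Taylor-complex argument ($i+1$ monomial generators of degree $2$ have an lcm of degree at most $2i+2$) is an even shorter route to the third inequality and is closer in spirit to the cited source; either way, what your write-up buys over the paper's is a complete proof within the paper's own toolkit (Hochster's Formula plus elementary topology), at the cost of about a page of induction. The handling of the degenerate cases ($d>n$ giving an empty sum, and $p=-1$ ruled out because every vertex of $U$ is an independent set and hence a face of $\Delta_U$) is also correct.
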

	
	The conditions $d< i+2$ and $d>n$ are simple corollaries of Hochster's Formula, and hold for all Stanley-Reisner ideals of degree 2 or higher, while the final condition $d>2i+2$ only holds for edge ideals. A proof for this final condition can be found in Lemma 2.2 in \cite{Katz}.
	
	These inequalities give us a much clearer picture of the shape of the Betti diagrams in $\mathcal{C}_n$. Specifically, if $n = 2r$ is even, then the diagrams $\beta\in \mathcal{C}_n$ look like this.
	\begin{equation}
		\begin{bmatrix}\label{matrix-C_2r}
			\beta_{0,2} & \beta_{1,3} & \beta_{2,4} & \dots & \dots & \beta_{n-3,n-1} & \beta_{n-2,n}\\
			& \beta_{1,4} & \beta_{2,5} & \dots & \dots & \beta_{n-3,n} &\\
			& & \ddots & & & &\\
			& & & \beta_{r-1,2r} & & & \\
		\end{bmatrix}
	\end{equation}
	If $n = 2r+1$ is odd, then they look like this.
	\begin{equation}
		\begin{bmatrix}\label{matrix-C_2r+1}
			\beta_{0,2} & \beta_{1,3} & \beta_{2,4} & \dots & \dots & \dots & \beta_{n-3,n-1} & \beta_{n-2,n}\\
			& \beta_{1,4} & \beta_{2,5} & \dots & \dots & \dots & \beta_{n-3,n} &\\
			& & \ddots & & & & &\\
			& & & \beta_{r-1,2r} & \beta_{r, 2r+1} & & & \\
		\end{bmatrix}
	\end{equation}

	 Thus, we may define our indexing set $S_n$ and subspace $W_n$ as follows.
	 \begin{defn}\label{def: Sn, Wn}
	 	For a fixed non-negative integer $n$, we define
	 	\begin{enumerate}
	 		\item $S_n := \{(i,d)\in \{0,...,n\}\times \mathds{Z}| i+2\leq d\leq \min\{2i+2,n\} \}$.
	 		\item $W_n := \bigoplus_{(i,d)\in S_n}\mathds{Q}$.
	 	\end{enumerate}
	 \end{defn}
 	
 	By Proposition \ref{inequalities}, the cone $\mathcal{C}_n$ must lie in $W_n$ as desired.
	 
	For ease of explanation, it is sometimes useful for us to refer to individual rows of $S_n$.
	 
	 \begin{defn}\label{rows}
	 	Let $(i,d)\in S_n$. We say $(i,d)$ is in row $\rho$ if we have $d-i-1=\rho$.
	 \end{defn}
 	
 	We can arrange the elements of $S_{2r}$ in rows as in equation (\ref{matrix-C_2r}).
 	\begin{equation}\label{matrix-S_2r}
 		\begin{matrix}
 			(0,2) & (1,3) & (2,4) & \dots & \dots & (n-3,n-1) & (n-2,n)\\
 			& (1,4) & (2,5) & \dots & \dots & (n-3,n) &\\
 			& & \ddots & & & &\\
 			& & & (r-1,2r) & & & \\
 		\end{matrix}
 	\end{equation}
 	Similarly, we can arrange the elements of $S_{2r+1}$ in rows as in equation (\ref{matrix-C_2r+1}).
 	\begin{equation}\label{matrix-S_2r+1}
 		\begin{matrix}
 			(0,2) & (1,3) & (2,4) & \dots & \dots & \dots & (n-3,n-1) & (n-2,n)\\
 			& (1,4) & (2,5) & \dots & \dots & \dots & (n-3,n) &\\
 			& & & \ddots & & & & &\\
 			& & & & (r-1,2r) & (r,2r+1) &  & & \\
 		\end{matrix}
 	\end{equation}
 	
 	We can see that row 1 of $S_n$ has $n-1$ elements, row 2 has $n-3$ elements, and so on. In general, for each $i=1,...,r$, row $i$ of $S_n$ has $n-2i+1$ elements.
 	
 	Hence we have
 	\begin{align*}
 		|S_{2r}|&= \sum_{i=1}^r (2r-2i+1)\\
 		&= \sum_{i=1}^r (2r) - \sum_{i=1}^{r}(2i-1)\\
 		&= 2r^2 - r^2\\
 		&= r^2
 	\end{align*}
 	
 	and
 	\begin{align*}
 		|S_{2r+1}|&=\sum_{i=1}^r (2r+1-2i+1)\\
 		&= \sum_{i=1}^r (2r+1) - \sum_{i=1}^{r}(2i-1)\\
 		&= (2r^2 + r) - r^2\\
 		&= r^2 + r \, .
 	\end{align*}
 
 	Therefore, the expressions in Theorem \ref{dimCn} are upper bounds for $\dim \mathcal{C}_n$.
	
	\subsection{Lower Bound}\label{sec: lb Cn}
	To complete our proof of Theorem \ref{dimCn}, it only remains to show that the space $W_n$ is the \textit{minimal} subspace of $V_n$ containing $\mathcal{C}_n$. We work towards finding a linearly independent set of Betti diagrams lying in $\mathcal{C}_n$, of the same size as $S_n$. This shows us that $\dim \mathcal{C}_n$ is at least as large as the cardinality of $S_n$, and hence we have $\dim \mathcal{C}_n = |S_n|$.
	
	In defining our linearly independent set of diagrams, we reuse the notation $K_m$ and $L$ introduced in section \ref{sec: Betti diagrams}.
	
	The following lemma is helpful, because it allows us to broaden our search from graphs with exactly $n$ vertices to graphs with at most $n$ vertices.
	
	\begin{lem}\label{Cm in Cn}
		For any positive integer $m$
		satisfying $m < n$, we have $\mathcal{C}_m \subset \mathcal{C}_n$.
	\end{lem}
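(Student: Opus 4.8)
The plan is to reduce the inclusion to a statement about generators, using the invariance of Betti diagrams under the addition of isolated vertices. Recall that $\mathcal{C}_m$ is, by definition, the convex cone generated by the diagrams $\beta(G)$ as $G$ ranges over all graphs on the vertex set $[m]$, and similarly $\mathcal{C}_n$ is generated by the diagrams $\beta(H)$ for $H$ a graph on $[n]$. Since a convex cone contains every nonnegative rational combination of its generators, it suffices to show that each generator of $\mathcal{C}_m$ is also a generator of $\mathcal{C}_n$; the inclusion $\mathcal{C}_m\subset\mathcal{C}_n$ then follows at once.

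To carry this out, first I would fix a graph $G$ on $[m]$. Because $m<n$, we may regard $[m]$ as a subset of $[n]$ and form the graph $\widetilde{G}$ on the vertex set $[n]$ with edge set $E(\widetilde{G})=E(G)$; concretely, $\widetilde{G}$ is obtained from $G$ by adjoining the $n-m$ extra vertices $m+1,\dots,n$ as isolated vertices. By construction the graph $\widetilde{G}^*$ obtained by deleting all isolated vertices of $\widetilde{G}$ has the same non-isolated vertices and edges as $G^*$, so $\widetilde{G}^*=G^*$. Applying Proposition \ref{beta-iso} to both graphs then gives $\beta(\widetilde{G})=\beta(\widetilde{G}^*)=\beta(G^*)=\beta(G)$.

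Identifying $V_m$ with its image in $V_n$ under the natural inclusion that extends each diagram by zero in the indices $i=m+1,\dots,n$, this equality says precisely that the generator $\beta(G)$ of $\mathcal{C}_m$ coincides with the generator $\beta(\widetilde{G})$ of $\mathcal{C}_n$. As $G$ was arbitrary, every generator of $\mathcal{C}_m$ is a generator of $\mathcal{C}_n$, and the desired inclusion follows from the reduction in the first paragraph.

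I do not anticipate a genuine obstacle here: the one point that requires care is the bookkeeping surrounding the ambient spaces, namely verifying that enlarging the polynomial ring from $k[x_1,\dots,x_m]$ to $R=k[x_1,\dots,x_n]$ leaves the graded Betti numbers unchanged. This is exactly the content of Proposition \ref{beta-iso} (whose proof rests on the identity $I(G)=I(G^*)$ of ideals), so once the isolated-vertex reformulation is in place the argument is immediate.
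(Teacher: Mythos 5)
Your proposal is correct and follows essentially the same route as the paper: extend a graph on $[m]$ to a graph on $[n]$ by adjoining isolated vertices and invoke Proposition \ref{beta-iso} to see that the Betti diagram is unchanged, so every generator of $\mathcal{C}_m$ is a generator of $\mathcal{C}_n$. Your version simply spells out the bookkeeping (the graph $\widetilde{G}$, the identification of ambient spaces) that the paper leaves implicit.
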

	\begin{proof}
		If $G$ is a graph on $[m]$, then by Proposition \ref{beta-iso}, we can extend it to a graph on $[n]$ by adding some isolated vertices, without affecting its Betti diagram. This means the diagram $\beta(G)$ lies inside $\mathcal{C}_n$, and the result follows.
	\end{proof}
	
	To find these diagrams, we use the
	concept of initiality introduced in Definition \ref{def: initial}. Specifically, we want to define an ordering $\prec$ on $S_n$ and then find an $(i,d)_\prec$-initial diagram in $\mathcal{C}_n$ for each $(i,d)$ in $S_n$. For convenience, we also extend our terminology to say that a graph $G$ is $(i,d)_\prec$-initial if its Betti diagram $\beta(G)$ is $(i,d)_\prec$-initial.
	
	\begin{defn}\label{def: order-Cn}
		For any two pairs $(i,d)$ and $(i',d')$ in $S_n$ we write $(i,d)\prec (i',d')$ if $d-i< d'-i'$, or $d-i= d'-i'$and $i<i'$.
	\end{defn}

	In other words we say $(i,d)\prec(i',d')$ if $(i,d)$ lies in a lower numbered row, or if they both lie in the same row with $i<i'$.

	This ordering is particularly useful to us for the following reason.
	\begin{lem}\label{G+L initiality}
		Let $\prec$ be as in Definition \ref{def: order-Cn}, let $(i,d)\in S_n$, and suppose $G$ is an $(i-1,d-2)_\prec$-initial graph on $[n-2]$. The graph $G+L$ is $(i,d)_\prec$-initial.
	\end{lem}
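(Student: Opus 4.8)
The plan is to verify directly the two conditions of Definition \ref{def: initial} for the diagram $\beta(G+L)$, using Proposition \ref{beta-L} to rewrite every relevant entry in terms of $\beta(G)$ and exploiting the non-negativity of Betti numbers. Throughout, the key structural fact is that the shift map $(j,e)\mapsto(j-1,e-2)$ preserves the order $\prec$: it lowers the row quantity $e-j$ by exactly $1$ and the first coordinate by exactly $1$, so both clauses of Definition \ref{def: order-Cn} are respected. Note also that since $\beta(G)$ is $(i-1,d-2)_\prec$-initial we have $(i-1,d-2)\in S_{n-2}$, and the restriction of $\prec$ to $S_{n-2}$ agrees with $\prec$ on $S_n$ because the order depends only on the quantities $e-j$ and $j$, not on the ambient $n$.

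For the first condition (non-vanishing at $(i,d)$), I would apply Proposition \ref{beta-L} to obtain $\beta_{i,d}(G+L)=\beta_{i,d}(G)+\beta_{i-1,d-2}(G)$. The term $\beta_{i-1,d-2}(G)$ is non-zero because $\beta(G)$ is $(i-1,d-2)_\prec$-initial, hence strictly positive as a Betti number; since $\beta_{i,d}(G)\geq 0$, the sum is strictly positive and in particular non-zero.

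For the second condition, fix $(i',d')\in S_n$ with $(i,d)\prec(i',d')$; I must show $\beta_{i',d'}(G+L)=0$. Again by Proposition \ref{beta-L} this equals $\beta_{i',d'}(G)+\beta_{i'-1,d'-2}(G)$, and by non-negativity it suffices to show both summands vanish. For $\beta_{i',d'}(G)$, observe that $(i-1,d-2)\prec(i,d)$ (the shift drops the row quantity, so $(i-1,d-2)$ sits strictly lower), whence transitivity gives $(i-1,d-2)\prec(i',d')$. For $\beta_{i'-1,d'-2}(G)$, order-preservation of the shift applied to $(i,d)\prec(i',d')$ gives $(i-1,d-2)\prec(i'-1,d'-2)$. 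In each case the relevant index lies strictly $\prec$-above $(i-1,d-2)$, so the corresponding Betti number of $G$ should be forced to vanish.

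The one point needing care — and the only real obstacle — is that the $(i-1,d-2)_\prec$-initiality of $\beta(G)$ only directly kills indices lying in $S_{n-2}$, whereas $(i',d')$ and $(i'-1,d'-2)$ need not lie in $S_{n-2}$. I would resolve this by a two-case split: if the index lies in $S_{n-2}$, then initiality of $\beta(G)$ at the strictly smaller index $(i-1,d-2)$ gives vanishing; if it does not, then since $G$ is a graph on $[n-2]$ we have $\beta(G)\in\mathcal{C}_{n-2}$, and Proposition \ref{inequalities} (applied with $n-2$ in place of $n$) forces any Betti number indexed outside $S_{n-2}$ to be zero. Combining the two cases shows both summands vanish, giving $\beta_{i',d'}(G+L)=0$ and completing the verification of both conditions.
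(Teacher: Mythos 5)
Your proposal is correct and follows essentially the same route as the paper's own proof: apply Proposition \ref{beta-L}, use initiality of $\beta(G)$ at $(i-1,d-2)$ for non-vanishing at $(i,d)$, and use the fact that the shift $(j,e)\mapsto(j-1,e-2)$ preserves $\prec$ to kill both summands at any $(i',d')$ with $(i,d)\prec(i',d')$. Your explicit two-case treatment of indices falling outside $S_{n-2}$ (via Proposition \ref{inequalities}) fills in a point the paper leaves implicit, but it is a refinement of the same argument rather than a different one.
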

	\begin{proof}
		By Proposition \ref{beta-L}, we have $\beta_{i,d}(G+L) = \beta_{i,d}(G) + \beta_{i-1,d-2}(G)$. By the $(i-1,d-2)_\prec$-initiality of $\beta(G)$, we have $\beta_{i-1,d-2}(G) \neq 0$, so $\beta_{i,d}(G+L)$ must be non-zero too.
		
		Now let $(i',d')\in S_n$ with $(i,d)\prec (i',d')$. Again, by Proposition \ref{beta-L} we have $\beta_{i',d'}(G+L) = \beta_{i',d'}(G) + \beta_{i'-1,d'-2}(G)$. We must have $(i-1,d-2)\prec (i'-1,d'-2)$, and also $(i'-1,d'-2)\prec_{h-1} (i',d')$ because they are in different rows. Hence, by the $(i-1,d-2)_\prec$-initiality of $\beta(G)$, both the terms $\beta_{i',d'}(G)$ and $\beta_{i'-1,d'-2}(G)$ are zero, and $\beta_{i',d'}(G+L)$ is zero too.
		
		This shows that $\beta(G+L)$ is $(i,d)_\prec$-initial as required.
	\end{proof}
	
	Before we explain the procedure for finding linearly independent diagrams in the general case, we present a specific example to illustrate the broad principles.
	
	\begin{ex}
		If $n=6$, then $S_n$ has size $3^2 = 9$, and it looks like the following.
		\begin{equation*}
			\begin{matrix}
				(0,2) & (1,3) & (2,4) & (3,5) & (4,6)\\
				& (1,4) & (2,5) & (3,6) &\\
				& & (2,6) & &
			\end{matrix}
		\end{equation*}
	
	So we want to find nine linearly independent diagrams in $\mathcal{C}_6$, one for each $(i,d)\in S_6$.
	
	The ordering $\prec$ on $S_6$ is $(0,2)\prec (1,3) \prec (2,4) \prec (3,5) \prec (4,6) \prec (1,4) \prec (2,5) \prec (3,6) \prec (2,6)$.
	
	By Proposition \ref{beta-Kn}, we see that the complete graph on $2$ vertices, $K_2$, is $(0,2)$-initial. Similarly, $K_3$ is $(1,3)$-initial, $K_4$ is $(2,4)$-initial, $K_5$ is $(3,5)$-initial and $K_6$ is $(4,6)$-initial.
	
	From the above, and Lemma \ref{G+L initiality}, we also find that $K_2+L$ is $(1,4)$-initial, $K_3+L$ is $(2,5)$-initial and $K_4+L$ is $(3,6)$-initial. Similarly, we can see that $K_2+2L$ is $(2,6)$-initial.
	
	So placing each graph in its corresponding position in $S_n$, we get the following.
	\begin{equation*}
		\begin{matrix}
			K_2 & K_3 & K_4 & K_5 & K_6\\
			& K_2+L & K_3+L & K_4+L &\\
			& & K_2+2L & &
		\end{matrix}
	\end{equation*}
	
	All of the graphs $K_2,K_3,K_4,K_6,K_2+L,K_3+L,K_4+L$ and $K_2+2L$ have $6$ vertices or less, so by Lemma \ref{Cm in Cn}, their diagrams all lie in $\mathcal{C}_6$ as required.\\
	\end{ex}
	
	In the above example, the graphs associated to the top row of $S_n$ were the complete graphs on $n$ or fewer vertices, and we found graphs for each subsequent row by adding disjoint edges to the graphs we had already found. We can generalise this process to arbitrary values of $n$, as follows.
	
	\begin{prop}\label{dimCn lower bound}
		Let $S_n$ be as in definition \ref{def: Sn, Wn}. We have $\dim \mathcal{C}_n \geq |S_n|$.
	\end{prop}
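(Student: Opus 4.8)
The plan is to prove the lower bound exactly as the example suggests: assign to each pair $(i,d) \in S_n$ a graph whose Betti diagram is $(i,d)_\prec$-initial (for the order $\prec$ of Definition \ref{def: order-Cn}) and which lives in $\mathcal{C}_n$. By the linear independence criterion recorded after Definition \ref{def: initial}, a family of diagrams in $\mathcal{C}_n$ indexed by $S_n$, with the $(i,d)$-th diagram being $(i,d)_\prec$-initial, is automatically linearly independent. Producing $|S_n|$ such diagrams therefore forces the minimal subspace containing $\mathcal{C}_n$ to have dimension at least $|S_n|$, giving $\dim \mathcal{C}_n \geq |S_n|$ and completing Theorem \ref{dimCn} when combined with the upper bound of Section \ref{sec: ub Cn}.

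I would build the graphs by induction on the row number $\rho = d - i - 1$. For the base case $\rho = 1$, so $d = i+2$, I would take the complete graph $K_{i+2}$: by Proposition \ref{beta-Kn} its nonzero entries are exactly $\beta_{0,2}, \dots, \beta_{i,i+2}$, all lying in row $1$, so among them $(i,i+2)$ is $\prec$-largest, and since $\beta_{i,i+2}(K_{i+2}) \neq 0$ while $K_{i+2}$ has no entries in higher rows, $K_{i+2}$ is $(i,i+2)_\prec$-initial. For the inductive step, given $(i,d)$ in row $\rho > 1$, the pair $(i-1,d-2)$ lies in row $\rho - 1$, so by induction it already has an assigned $(i-1,d-2)_\prec$-initial graph $G$; Lemma \ref{G+L initiality} then gives that $G + L$ is $(i,d)_\prec$-initial. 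Unwinding the recursion, the graph attached to $(i,d)$ is $K_{i-\rho+3} + (\rho-1)L$, matching the pattern $K_2+2L$, $K_4+L$, etc. seen in the worked example.

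The one point needing genuine care — and the main obstacle — is confirming that each constructed graph has at most $n$ vertices, so that Lemma \ref{Cm in Cn} places its diagram inside $\mathcal{C}_n$. Counting vertices, $K_{i-\rho+3}$ contributes $i-\rho+3$ and the $\rho-1$ disjoint edges contribute $2(\rho-1)$, for a total of $i+\rho+1 = d$; since $(i,d)\in S_n$ forces $d \leq n$ by Definition \ref{def: Sn, Wn}, the vertex count never exceeds $n$. I would also check that the base complete graph is nontrivial: the defining inequality $d \leq 2i+2$ of $S_n$ rearranges to $\rho \leq i+1$, hence $i-\rho+3 \geq 2$, so $K_{i-\rho+3}$ genuinely carries an edge and a nonzero diagram (and, by the same inequality, the descended index $(i-\rho+1,\,d-2\rho+2)$ remains a legitimate row-$1$ element). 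With the vertex bound and nontriviality in hand, every assigned diagram lies in $\mathcal{C}_n$ and is initial at its index, so the family is linearly independent and $\dim \mathcal{C}_n \geq |S_n|$ follows.
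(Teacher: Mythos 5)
Your proposal is correct and follows essentially the same route as the paper: both assign $K_{i-\rho+3}+(\rho-1)L$ to the pair $(i,d)$ in row $\rho$, using Proposition \ref{beta-Kn} for the row-one base case and Lemma \ref{G+L initiality} for the recursive step, with the vertex count $d\leq n$ and Lemma \ref{Cm in Cn} placing each diagram in $\mathcal{C}_n$. The only difference is that you induct on the row number $\rho$ while the paper inducts on $n$; the constructed graphs are identical, and your explicit verification of the vertex bound and of the nontriviality of the base complete graph is a welcome touch of extra care.
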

	\begin{proof}
		We show that there exists a set of graphs $\{G_{i,d}|(i,d)\in S_n\}$ such that for each $(i,d)$ in $S_n$, $G_{i,d}$ is $(i,d)_\prec$-initial and has $d$ vertices. This proves the result, because it means that for each $(i,d)$ in $S_n$ we have $d\leq n$, and hence the Betti diagram of these graphs all lie in $\mathcal{C}_n$ by Lemma \ref{Cm in Cn}.
		
		We proceed by induction on $n\geq 1$. The set $S_1$ is empty, so for the base case $n=1$ there is nothing to prove.
		
		For the inductive step, suppose that we have a set $\{G_{i,d}|(i,d)\in S_{n-1}\}$ where each $G_{i,d}$ is an $(i,d)$-initial graph on $d$ vertices. The set $S_{n-1}$ is a subset of $S_n$, so we can extend our set of graphs to a set $\{G_{i,d}|(i,d)\in S_n\}$ by adding graphs $G_{i,d}$ for the values of $(i,d)$ in $S_n-S_{n-1}$.
		
		By Proposition \ref{beta-Kn}, the complete graph $K_n$ is $(n-2,n)$-initial and has $n$ vertices, so we can set $G_{n-2,n}=K_n$.
		
		From the diagrams \ref{matrix-S_2r} and \ref{matrix-S_2r+1} at the end of Section \ref{sec: ub Cn}, we can see that for every other value of $(i,d)$ in $S_n-S_{n-1}$, $(i-1,d-2)$ is in $S_{n-1}$. Hence, we can define $G_{i,d}=G_{i-1,d-2}+L$. This graph has $(d-2)+2=d$ vertices and by Lemma \ref{G+L initiality}, we know it must be $(i,d)$-initial. This completes the proof.
	\end{proof}

\section{Dimension of $C_n^h$}
	In this section, we prove Theorem \ref{dimCnh}.
	
	\subsection{Upper Bound}
	Again, we start by showing that the formula given in Theorem \ref{dimCnh} is an upper bound for the dimension of $\mathcal{C}_n^h$, by finding a subset $S_n^h\subset S_n$ such that for every $\beta\in \mathcal{C}_n^h$, and every integer $d$ and $i=0,...,d$ with $(i,d)\notin S_n^h$, we have $\beta_{i,d}=0$. This demonstrates that $\mathcal{C}_n^h$ actually lies inside the subspace $(W_n^h)' = \bigoplus_{(i,d)\in S_n^h}\mathds{Q}$, and hence that we have $\dim \mathcal{C}_n^h\leq \dim (W_n^h)' = |S_n^h|$.
	
	As will become clear when we find our candidate for $S_n^h$, the dimension of $(W_n^h)'$ is slightly larger than $h(n-h-1)$. This is because every diagram $\beta$ in $\mathcal{C}_n^h$ must satisfy $HK_j(\beta)=0^j$ for all $j=0,...,h-1$ (by the argument at the end of Section \ref{sec: key tools}) and hence they all lie in the proper subspace $W_n^h = \{\beta\in (W_n^h)'| HK_j(\beta)=0 \text{ for } j=1,...,h-1\}$ of $(W_n^h)'$. We show that this latter space $W_n^h$ has the correct dimension.
	
	To find our indexing set $S_n^h$, we need to obtain some restrictions on the positions of the non-zero values of the diagrams in $\mathcal{C}_n^h$. To help us with this, we need two important lemmas.
	
	First, recall the following definitions.
	\begin{enumerate}
		\item A \textbf{vertex cover} for a graph $G$ is a subset $U\subseteq [n]$ such that for every edge $e$ in $G$, the set $e\cap U$ is non-empty.
		\item A \textbf{matching} in $G$ is a collection $C$ of pairwise disjoint edges in $G$. We call such a matching maximal if it is maximal with respect to inclusion.
		\item The \textbf{regularity} $\reg M$ of a graded $R$-module $M$ is (among other things) the maximum value of $d-i$ such that $\beta_{i,d}(M)$ is non-zero.
	\end{enumerate}
	
	Our two lemmas are below. Lemma \ref{reg} is Theorem 4.4 in \cite{SFMon}.
	\begin{lem}\label{ht}
		Let $G$ be a graph on vertex set $[n]$, and $\Delta$ be the independence complex of $G$. The following are equivalent.
		\begin{enumerate}
			\item The height of $I(G)$ is equal to $h$.
			\item $G$ has a minimally sized vertex cover of size $h$.
			\item The minimum number of vertices needed to be removed from $G^c$ to obtain a complete graph is $h$.
			\item $h = n- \dim \Delta - 1$.
		\end{enumerate}
	\end{lem}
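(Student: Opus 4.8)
The plan is to show that all four quantities named in the conditions coincide with $n - \alpha(G)$, where $\alpha(G)$ denotes the maximum size of an independent set in $G$, and to route the equivalences through this common value. The key combinatorial observation, which I would establish first, is that a set $U \subseteq [n]$ is a vertex cover of $G$ if and only if its complement $[n]\setminus U$ is an independent set of $G$: indeed, $U$ fails to cover some edge $\{i,j\}$ precisely when both $i$ and $j$ lie in $[n]\setminus U$, that is, precisely when $[n]\setminus U$ contains an edge. Taking complements of largest independent sets then shows that the minimum size of a vertex cover equals $n - \alpha(G)$, which gives the equivalence of (2) with the statement $h = n - \alpha(G)$.

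For (4), I would use the identification $\alpha(G) = \dim\Delta + 1$. The faces of the independence complex $\Delta = \Delta(G)$ are exactly the independent sets of $G$, so a largest independent set is a face of top dimension, whence $\dim\Delta = \alpha(G) - 1$. Substituting gives $n - \dim\Delta - 1 = n - \alpha(G)$, so (4) is literally the statement $h = n - \alpha(G)$, establishing (2) $\Leftrightarrow$ (4).

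To handle (3), I would pass to the clique-complex description of $\Delta$ recorded in Section \ref{sec: prelim-results}: the cliques of $G^c$ are exactly the independent sets of $G$. Removing a set $U$ from $G^c$ leaves the complete graph on $[n]\setminus U$ if and only if $[n]\setminus U$ is a clique of $G^c$, equivalently an independent set of $G$, equivalently (by the first observation) $U$ is a vertex cover of $G$. Thus the minimum number of vertices one must delete from $G^c$ to reach a complete graph is again the minimum vertex cover size, giving (2) $\Leftrightarrow$ (3).

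Finally, for (1) I would invoke the dimension theory of Stanley-Reisner rings. Since $I(G) = I_\Delta$, the height of $I(G)$ equals the codimension of $k[\Delta] = R/I_\Delta$, and the Krull dimension of $k[\Delta]$ is $\dim\Delta + 1$. Hence $\height I(G) = n - (\dim\Delta + 1) = n - \dim\Delta - 1$, which is exactly condition (4); this yields (1) $\Leftrightarrow$ (4) and closes the chain. I expect the step requiring the most care to be this last one, since it is the only part that leaves purely combinatorial territory and relies on the standard but nontrivial fact that $\dim k[\Delta] = \dim\Delta + 1$; every other step reduces to the complementation bijection between vertex covers and independent sets, which I would set up once and reuse throughout.
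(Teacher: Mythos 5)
Your proof is correct, and it differs from the paper's in a way worth noting. The paper establishes the equivalences piecewise from two separate algebraic citations: (1)$\Leftrightarrow$(2) is quoted directly from Villarreal (Corollary 7.2.4, which rests on the primary decomposition of $I(G)$ into minimal primes generated by minimal vertex covers), (2)$\Leftrightarrow$(3) is the same complementation argument you give, and (1)$\Leftrightarrow$(4) comes from the Stanley--Reisner dimension formula $\dim k[\Delta]=\dim\Delta+1$ together with $\dim k[\Delta]=n-\height I_\Delta$. You instead route everything through the single hub $n-\alpha(G)$: the Gallai complementation between vertex covers and independent sets gives (2), the identification of faces of $\Delta(G)$ with independent sets gives (4), the clique-complex description gives (3), and only the link to (1) requires algebra --- the same Stanley--Reisner fact the paper uses for (4). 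The net effect is that you recover (1)$\Leftrightarrow$(2) as a corollary of the chain rather than importing it from Villarreal, so your argument needs one external algebraic input where the paper needs two; the paper's version, by citing the primary decomposition result, makes the connection between heights and vertex covers more direct and would survive even if one did not wish to invoke the dimension theory of Stanley--Reisner rings. Both are complete; you are right that the step $\height I_\Delta = n - \dim k[\Delta]$ (valid since $R$ is a polynomial ring, hence catenary with all maximal chains of equal length) together with $\dim k[\Delta]=\dim\Delta+1$ is the only place requiring genuine care.
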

	\begin{proof}
		For the equivalence of (1) and (2), see Corollary 7.2.4 in \cite{Vil}. The equivalence of (2) and (3) follows from the fact that $U$ is a vertex cover for $G$ if and only if $G-U$ has no edges, which means it is the complement of a complete graph. The equivalence of (1) and (4) follows directly from Theorem 5.1.4 in \cite{C-M}, and the fact that the Krull dimension of $k[\Delta]$ is equal to $n-\height I_\Delta$.
	\end{proof}
	
	\begin{lem}\label{reg}
		Let $G$ be a graph on vertex set $[n]$, and let $\alpha$ be the minimum size of a maximal matching in $G$. We have $\reg I(G)\leq \alpha + 1$.
	\end{lem}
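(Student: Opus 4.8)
The plan is to reduce to the quotient ring and argue by induction on the number of vertices. Since $\beta_{i,d}(R/I(G))=\beta_{i-1,d}(I(G))$ for $i>0$ (and $\beta_{0,0}(R/I(G))=1$), we have $\reg I(G)=\reg(R/I(G))+1$, so it suffices to prove $\reg(R/I(G))\le\alpha$. For the inductive machinery I would use the standard recursive bound: for any vertex $v$,
\[
\reg(R/I(G))\le\max\{\reg(R/I(G-v)),\ \reg(R/I(G-N[v]))+1\},
\]
where $G-v$ deletes $v$ and $G-N[v]$ deletes $v$ together with all its neighbours. This follows from the short exact sequence $0\to R/(I(G):x_v)\to R/I(G)\to R/(I(G)+(x_v))\to 0$, together with the identifications $(I(G):x_v)=(I(G-N[v]),\{x_w:w\in N(v)\})$ and $I(G)+(x_v)=(I(G-v),x_v)$: quotienting by the extra linear forms leaves regularity unchanged, while the degree-one inclusion accounts for the $+1$.

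To close the induction I need to choose $v$ well and control how $\alpha$ (the minimum size of a maximal matching, equivalently the edge-domination number) behaves under the two deletions. I would fix a minimum maximal matching $M$ and take $v$ to be an endpoint of an edge $e\in M$ (the base case is the edgeless graph, where $\reg(R/I(G))=0=\alpha$). The two facts I must establish are: (a) $\alpha(G-v)\le\alpha$ for every vertex $v$; and (b) $\alpha(G-N[v])\le\alpha-1$ for the chosen $v$. Granting these, the displayed inequality and the inductive hypotheses give $\reg(R/I(G))\le\max\{\alpha,(\alpha-1)+1\}=\alpha$, as required.

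Fact (a) is straightforward: if $v$ is unmatched by $M$ then $M$ remains a maximal matching of $G-v$; if $v$ is matched, say by $vw\in M$, then $M\setminus\{vw\}$ is a matching of $G-v$ whose only newly exposed vertex is $w$, and since the $M$-unmatched vertices form an independent set, at most one further edge (incident to $w$) can be added to regain maximality, giving a maximal matching of size at most $\alpha$.

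Fact (b) is the crux, and I expect it to be the main obstacle, since the minimum maximal matching number is not monotone under deletion in any obvious way. Here is the argument I would run: let $M_0\subseteq M$ consist of the edges disjoint from $N[v]$, a matching of $G-N[v]$, and extend it greedily to a maximal matching $M^\sharp$ of $G-N[v]$. The vertices of $G-N[v]$ left uncovered by $M_0$ are precisely the surviving $M$-unmatched vertices together with the ``dangling'' endpoints $D$ --- the endpoints, lying outside $N[v]$, of those edges of $M\setminus\{e\}$ that do meet $N[v]$. Because the $M$-unmatched vertices are independent, every edge added during the extension must consume a distinct vertex of $D$, so $|M^\sharp\setminus M_0|\le|D|$. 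Since each edge of $M\setminus\{e\}$ meeting $N[v]$ contributes at most one vertex to $D$, we get $|D|\le(\alpha-1)-|M_0|$, whence $|M^\sharp|\le|M_0|+|D|\le\alpha-1$. This yields $\alpha(G-N[v])\le\alpha-1$ and completes the induction. A parallel route, closer to the tools already in the paper, would replace the algebraic recursion by the Mayer--Vietoris sequence of $\mathrm{Ind}(G)=\mathrm{Ind}(G-v)\cup\big(v*\mathrm{Ind}(G-N[v])\big)$ and read off the same bound on reduced homology via Hochster's Formula; the combinatorial facts (a) and (b) are exactly what that argument needs as well.
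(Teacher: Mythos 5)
Your proof is correct, but it is worth noting that the paper does not actually prove this lemma at all: it is quoted as Theorem 4.4 of the cited survey of H\`{a} (the result is originally due to Woodroofe), so you have supplied a self-contained argument where the paper supplies only a citation. Your route is the standard one for this theorem: the short exact sequence $0\to \bigl(R/(I(G):x_v)\bigr)(-1)\to R/I(G)\to R/(I(G)+(x_v))\to 0$ gives $\reg (R/I(G))\leq \max\{\reg (R/I(G-v)),\ \reg (R/I(G-N[v]))+1\}$, and the two combinatorial facts you isolate are exactly what is needed to close the induction. Both are argued correctly: for (a), the key observation that the $M$-unmatched vertices form an independent set shows at most one edge can be added after deleting $vw$ from $M$; for (b), your bookkeeping is right --- the edge $e=vu$ has both endpoints inside $N[v]$ and so contributes nothing to the set $D$ of dangling endpoints, each of the remaining $\alpha-1-|M_0|$ edges of $M$ meeting $N[v]$ contributes at most one vertex to $D$, and independence of the surviving unmatched vertices forces every edge of the greedy extension to consume a distinct element of $D$, giving $|M^\sharp|\leq\alpha-1$. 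The one ingredient you assert rather than fully prove is the regularity recursion itself, but your justification via the colon-ideal identification and the behaviour of regularity under quotienting by variables not appearing in the ideal is the standard and correct one. Compared with simply citing the survey, your argument has the advantage of using only tools already implicit in the paper (short exact sequences, or alternatively the Mayer--Vietoris decomposition of the independence complex combined with Hochster's Formula, as you note), at the cost of about a page of combinatorial verification.
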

	
	Using these two lemmas, we can prove the following proposition.
	\begin{prop}\label{Snh}
		Consider $\beta\in \mathcal{C}_n^h$. For every $(i,d)\in S_n$ satisfying $d - i > \min\{h,n-h\}+1$, we have $\beta_{i,d}=0$.
	\end{prop}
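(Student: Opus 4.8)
The plan is to reduce the statement to a single inequality bounding the regularity of edge ideals, and then to establish that inequality through two independent arguments. Since $\mathcal{C}_n^h$ is generated by the diagrams $\beta(I(G))$ of graphs $G$ with $\height I(G)=h$, and since the $(i,d)$-coordinate of a nonnegative combination of such diagrams vanishes as soon as it vanishes on every generator, it suffices to prove the claim for a single diagram $\beta=\beta(G)$ with $G$ of height $h$. A pair $(i,d)$ lies in row $\rho=d-i-1$, so the hypothesis $d-i>\min\{h,n-h\}+1$ simply says that $d-i$ exceeds $\min\{h,n-h\}+1$. Because $\reg I(G)$ is by definition the largest value of $d-i$ for which some $\beta_{i,d}(G)$ is nonzero, the whole proposition follows once I show
\[
\reg I(G)\le \min\{h,\,n-h\}+1.
\]

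I would obtain this by proving two separate upper bounds and taking their minimum. For the bound $\reg I(G)\le (n-h)+1$, I would appeal to Hochster's Formula (Proposition \ref{hoc}): a nonzero $\beta_{i,d}(G)$ forces $\widetilde{H}_{d-i-2}(\Delta_U)\neq 0$ for some $U$ with $|U|=d$, where $\Delta=\Delta(G)$. Reduced homology of $\Delta_U$ vanishes above its dimension, and $\dim\Delta_U\le\dim\Delta$; combining this with $\dim\Delta=n-h-1$ from Lemma \ref{ht}(4) gives $d-i-2\le n-h-1$, that is, $d-i\le (n-h)+1$.

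For the bound $\reg I(G)\le h+1$, I would use Lemma \ref{reg}, which yields $\reg I(G)\le\alpha+1$, where $\alpha$ is the minimum size of a maximal matching of $G$. It then remains only to show $\alpha\le h$. I would argue this through the chain (minimum maximal matching)$\,\le\,$(maximum matching)$\,\le\,$(minimum vertex cover)$\,=h$: the first inequality holds because any maximal matching is in particular a matching, the second is the standard observation that the pairwise disjoint edges of a matching must be covered by distinct vertices of any vertex cover, and the final equality is Lemma \ref{ht}(2). Combining the two bounds gives $\reg I(G)\le\min\{h,\,n-h\}+1$, and hence $\beta_{i,d}=0$ for every $(i,d)$ with $d-i>\min\{h,n-h\}+1$, as required.

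I expect the combinatorial inequality $\alpha\le h$ to be the only step needing genuine care, since it is the one place an auxiliary graph-theoretic argument is invoked rather than a cited result. The key conceptual point, however, is recognizing that the two branches of $\min\{h,n-h\}$ arise from two genuinely different mechanisms of vanishing — a topological dimension bound coming from Hochster's Formula on one side, and a matching-theoretic regularity bound on the other — so that neither estimate alone is enough to pin down the correct threshold.
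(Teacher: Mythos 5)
Your proposal is correct and follows essentially the same route as the paper: both reduce to bounding $\reg I(G)$ for a single height-$h$ graph, obtain $\reg I(G)\le n-h+1$ from Hochster's Formula together with $\dim\Delta=n-h-1$ from Lemma \ref{ht}, and obtain $\reg I(G)\le h+1$ from Lemma \ref{reg} by showing the minimum size of a maximal matching is at most $h$ via the vertex cover. Your chain (minimum maximal matching) $\le$ (maximum matching) $\le$ (minimum vertex cover) $=h$ is just a slightly more explicit phrasing of the paper's observation that every edge of a matching must use a distinct cover vertex.
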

	
	\begin{proof}
		Let $G$ be a graph on $n$ vertices of height $h$. We need to show that $\reg I(G)$ is less than or equal to both $h+1$ and $n-h+1$.
		
		To show that we have $\reg I(G)\leq h+1$, first note that, by Lemma \ref{ht}, there is a minimal vertex cover $\{x_{i_1},...,x_{i_h}\}$ for $G$. This means that every edge in $E(G)$ must contain at least one of $x_{i_1},...x_{i_h}$. Hence, no matching in $G$ can consist of more than $h$ edges, so the minimal size of a maximal matching in $G$ must be less than or equal to $h$. By Lemma \ref{reg}, we have $\reg I(G)\leq h+1$.
		
		To show that $\reg I(G)\leq n-h+1$, we appeal to Hochster's Formula. Recall that this was
		\begin{equation*}
			\beta_{i,d}(I_\Delta) =\sum_{|U|=d} \dim_k \widetilde{H}_{d-i-2}(\Delta_U)\, .
		\end{equation*}
		
		Let $\beta=\beta(I(G))$, $\Delta$ be the indepndence complex of $G$, and $U$ be a subset of $[n]$.
	
		Suppose $\Delta$ has dimension $D$. This means that $\Delta$ has no faces of dimension higher than $D$, and in particular, $\Delta_U$ has no faces of dimension higher than $D$ either. Hence for any $j> D$, we have $\widetilde{H}_j(\Delta_U) = 0$.
		
		Let $(i,d)\in S_n$ be such that $\beta_{i,d}\neq 0$. By Hochster's Formula, and the argument above, we must have that $d-i-2\leq D$.
		
		By Lemma \ref{ht}, we know that $D=n-h-1$. Thus we conclude that $d-i\leq n-h+1$, as required.
	\end{proof}

	These inequalities give us a much clearer picture of what the diagrams in $\mathcal{C}_n^h$ look like. Setting $m=\min\{h,n-h\}$, we get that the diagrams $\beta\in \mathcal{C}_n^h$ look like this.
	\begin{equation}
		\begin{bmatrix}\label{matrix-C_n^h}
			\beta_{0,2} & \dots & \dots & \dots & \dots & \dots & \beta_{n-2,n}\\
			&\ddots & & & & &\\
			& & \beta_{m-1,2m} &\dots &\beta_{n-m-1,n} & \\
		\end{bmatrix}
	\end{equation}

	Thus, we may define our indexing set $S_n^h$ and our subspaces $(W_n^h)'$ and $W_n^h$ as follows.
	
	Thus, we may define our indexing set $S_n$ and subspace $W_n$ as follows.
	\begin{defn}\label{def: Snh, Wnh}
		For fixed non-negative integers $h<n$, we define
		\begin{enumerate}
			\item $S_n^h := \{(i,d)\in S_n| d-i\leq \min\{h,n-h\}+1\}$.
			\item $(W_n^h)' := \bigoplus_{(i,d)\in S_n^h}\mathds{Q}$.
			\item $W_n^h := \{\beta\in (W_n^h)'| HK_j(\beta)=0 \text{ for } j=1,...,h-1\}$.
		\end{enumerate}
	\end{defn}
	
	By Proposition \ref{Snh} and Lemma \ref{HK}, the cone $\mathcal{C}_n^h$ must lie in $W_n^h$ as desired.
	
	To prove that the fomula given in Theorem \ref{dimCnh} is an upper bound for $\dim \mathcal{C}_n^h$, we only need to show that $\dim W_n^h=h(n-h-1)$, which we do in two parts.
	
	\begin{prop}\label{|S_n^h|}
		Let $S_n^h$ be as in definition \ref{def: Snh, Wnh}. We have $|S_n^h|=h(n-h)$.
	\end{prop}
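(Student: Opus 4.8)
The plan is to count $S_n^h$ by organising its elements into the rows of Definition \ref{rows} and summing the row sizes. Writing $m = \min\{h, n-h\}$, an element $(i,d)$ lies in row $\rho$ precisely when $d - i - 1 = \rho$, so the defining condition $d - i \leq m + 1$ of $S_n^h$ is equivalent to $\rho \leq m$. Crucially, this condition depends only on the row number $\rho$ and not on the individual pair $(i,d)$: for each row $\rho$, either every element of that row lies in $S_n^h$ (when $\rho \leq m$) or none does (when $\rho > m$). Hence $S_n^h$ is exactly the union of the complete rows $1, 2, \ldots, m$ of $S_n$.

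First I would record, as established at the end of Section \ref{sec: ub Cn}, that row $\rho$ of $S_n$ contains $n - 2\rho + 1$ elements for each $\rho = 1, \ldots, r$. I then need to check that all the rows $1, \ldots, m$ that I am summing actually occur in $S_n$, i.e. that $m \leq r$. This follows from the fact that two numbers summing to $n$ have minimum at most $n/2$, which gives $m \leq r$ in both the even case $n = 2r$ and the odd case $n = 2r+1$.

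With these two facts in hand, the count is a direct sum:
\begin{align*}
|S_n^h| = \sum_{\rho=1}^{m}(n - 2\rho + 1) = m(n+1) - 2 \cdot \frac{m(m+1)}{2} = m(n - m).
\end{align*}
It then remains to identify $m(n-m)$ with $h(n-h)$. This is where the symmetry of the problem does the work: if $h \leq n - h$ then $m = h$ and $m(n-m) = h(n-h)$ directly, while if $h > n - h$ then $m = n - h$ and $m(n - m) = (n-h)\bigl(n - (n-h)\bigr) = (n-h)h = h(n-h)$. Either way we obtain $|S_n^h| = h(n-h)$.

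The computation is routine, so I do not expect a serious obstacle; the only points requiring genuine care are verifying $m \leq r$, so that the summation ranges over genuine rows of $S_n$ and does not overshoot the bottom of the diagram, and observing the symmetry $m(n-m) = h(n-h)$, which is what allows the minimum $\min\{h, n-h\}$ appearing in the definition of $S_n^h$ to collapse to the clean expression $h(n-h)$ in the statement.
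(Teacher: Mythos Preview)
Your proof is correct and follows essentially the same approach as the paper: both set $m=\min\{h,n-h\}$, observe that $S_n^h$ comprises exactly rows $1,\ldots,m$ of $S_n$, sum the row sizes $n-2\rho+1$ to get $m(n-m)$, and identify this with $h(n-h)$. You are slightly more explicit than the paper in justifying that $m\leq r$ and in spelling out the symmetry $m(n-m)=h(n-h)$, but the argument is the same.
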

	\begin{proof}
		Let $m=\min\{h,n-h\}$. The set $S_n^h$ looks like the following.
		\begin{equation*}\label{S_n^h}
			\begin{matrix}
				(0,2) & \dots & \dots & \dots & \dots & \dots & (n-1,n)\\
				&\ddots & & & & &\\
				& & (m-1,2m) &\dots &(n-m-1,n) & \\
			\end{matrix}
		\end{equation*}
		
		As noted in Section \ref{sec: ub Cn}, for all $i=1,...,m$, row $i$ of $S_n$ has $n-2i+1$ elements.
		
		Hence we have
		\begin{align*}
			|S_n^h|&= \sum_{i=1}^m (n-2i+1)\\
			&= \sum_{i=1}^m n - \sum_{i=1}^m(2i-1)\\
			&= nm - m^2\\
			&= m(n-m)\\
			&=h(n-h)\, .
		\end{align*}
	\end{proof}

	\begin{prop}\label{dimCnh-upper}
		Let $W_n^h$ be as in definition \ref{def: Snh, Wnh}. We have $\dim W_n^h = h(n-h-1) + 1$.
	\end{prop}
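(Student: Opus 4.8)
The plan is to leverage the two facts that have essentially already been set up: that $(W_n^h)'$ has dimension $|S_n^h| = h(n-h)$ by Proposition \ref{|S_n^h|}, and that $W_n^h$ is by Definition \ref{def: Snh, Wnh} the intersection inside $(W_n^h)'$ of the $h-1$ hyperplanes $\{HK_j = 0\}$ for $j = 1, \ldots, h-1$. Consider the linear map $(W_n^h)' \to \mathds{Q}^{h-1}$ sending $\beta \mapsto (HK_1(\beta),\ldots,HK_{h-1}(\beta))$; its kernel is exactly $W_n^h$. By rank--nullity, if the functionals $HK_1,\ldots,HK_{h-1}$ are linearly independent when restricted to $(W_n^h)'$, then this map is surjective and
\[
\dim W_n^h = \dim (W_n^h)' - (h-1) = h(n-h) - (h-1) = h(n-h-1) + 1,
\]
which is exactly the claimed value. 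So the entire proposition reduces to establishing the linear independence of $HK_1,\ldots,HK_{h-1}$ on $(W_n^h)'$.

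To prove that independence, I would take a vanishing combination $\sum_{j=1}^{h-1} c_j\, HK_j = 0$ on $(W_n^h)'$ and show each $c_j = 0$. Since $HK_j(\beta) = \sum_{i,d}(-1)^i d^j \beta_{i,d}$, the coefficient of the coordinate $\beta_{i,d}$ in $\sum_j c_j\, HK_j$ is $(-1)^i\sum_{j=1}^{h-1} c_j d^j$. For the combination to be the zero functional on $(W_n^h)'$, each such coefficient must vanish for every $(i,d)\in S_n^h$, so the polynomial $p(x) = \sum_{j=1}^{h-1} c_j x^j$ must satisfy $p(d) = 0$ at every value $d$ that occurs as a second coordinate of some element of $S_n^h$. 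Now $p$ has degree at most $h-1$, while the top row of $S_n^h$ consists of the pairs $(i,i+2)$ for $i = 0,\ldots,n-2$, so the values $d = 2,3,\ldots,n$ all occur, giving $n-1$ distinct roots. Since $h < n$ we have $n-1 \geq h > h-1$, so a polynomial of degree at most $h-1$ with this many distinct roots must be identically zero; hence $c_1 = \cdots = c_{h-1} = 0$, as required.

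I expect the main obstacle to be the middle step: correctly translating the statement that a combination of the $HK_j$ vanishes on $(W_n^h)'$ into the pointwise polynomial condition $p(d)=0$, and then checking that $S_n^h$ genuinely supplies enough distinct second coordinates for the root-counting (Vandermonde-type) argument to force $p\equiv 0$. The key combinatorial input there is simply that the top row of $S_n^h$ already realizes every $d\in\{2,\ldots,n\}$, which is why it is safe to discard the height restriction cutting off the lower rows. Once the functionals are known to be independent, the dimension formula follows immediately from rank--nullity, with no further computation needed.
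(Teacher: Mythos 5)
Your proof is correct and follows essentially the same route as the paper: reduce to the linear independence of the functionals $HK_1,\ldots,HK_{h-1}$ restricted to $(W_n^h)'$, and establish that independence by noting that a nonzero polynomial $\sum_{j=1}^{h-1}c_j x^j$ of degree at most $h-1$ cannot vanish at the $n-1$ distinct values $d=2,\ldots,n$ supplied by the top row of $S_n^h$ --- the paper phrases this same fact as linear independence of the columns of a Vandermonde matrix in the auxiliary variables $t_d=\sum_i(-1)^i\beta_{i,d}$. Your rank--nullity formulation also yields the equality $\dim W_n^h=\dim (W_n^h)'-(h-1)$ directly, which is marginally cleaner than the inequality chain the paper writes down.
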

	\begin{proof}
		Recall that $W_n^h$ is defined as the subspace of $(W_n^h)'$ consisting of all the diagrams $\beta$ for which $HK_1(\beta)=...=HK_{h-1}(\beta)=0$.
		
		Hence it is enough to show that the linear forms $HK_1(\beta),...,HK_{h-1}(\beta)$ are linearly independent, because if this is the case then we have
		\begin{align*}
			\dim W_n^h &\leq \dim (W_n^h)' - (h-1)\\
			&= |S_n^h|-h+1\\
			&= h(n-h)-h+1\\
			&= h(n-h-1)+1\, .
		\end{align*}
		
		To this end we define, for each $d=1,...,n$,
		\begin{align*}
			t_d = \sum_{i} (-1)^i \beta_{i,d}\, .
		\end{align*}
		
		The relations $HK_1(\beta)= ... = HK_{h-1}(\beta)=0$ may be expressed as
		\begin{equation*}
			(t_1,...,t_n)
			\begin{pmatrix}
				1 & \dots & 1\\
				2 & \dots & 2^{h-1}\\
				\vdots & \vdots & \vdots\\
				n & \dots & n^{h-1}
			\end{pmatrix}
			=0\, .
		\end{equation*}
		
		The matrix of coefficients given above is a Vandermonde matrix with distinct rows, which means in particular that all of its columns are linearly independent.
		
		Thus the Herzog-K\"{u}hl equations are linearly independent in the indeterminates $t_1,..,t_n$, and hence linearly independent in the indeterminates $\beta_{i,d}$ for $(i,d)\in S_n^h$. This completes the proof.
	\end{proof}
	
	\subsection{Lower Bound}
	To complete our proof of Theorem \ref{dimCnh}, it only remains to show that the space $W_n^h$ is the \textit{minimal} subspace of $W_n$ containing $\mathcal{C}_n^h$.
	
	Just as in Section \ref{sec: lb Cn}, we do this by finding a linearly independent set of Betti diagrams lying in $\mathcal{C}_n^h$ of size $h(n-h-1)+1$. This shows that $\dim \mathcal{C}_n$ is at least $h(n-h-1) + 1$, and hence we have $\dim \mathcal{C}_n = h(n-h-1) + 1$.
	
	In defining our linearly independent set of diagrams, we reuse the notation $K_m$, $E_m$, $C_m$ and $L$ introduced in section \ref{sec: Betti diagrams}.
	
	The following lemma is an analogue of Lemma \ref{Cm in Cn}.
	
	\begin{lem}\label{Cmh in Cnh}
		For any positive integer $m$
		satisfying $m < n$, we have $\mathcal{C}_m^h \subset \mathcal{C}_n^h$.
	\end{lem}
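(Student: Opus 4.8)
The plan is to mirror the proof of Lemma \ref{Cm in Cn}, adding one extra check: that adjoining isolated vertices to a graph leaves the height of its edge ideal unchanged. Let $G$ be a graph on $[m]$ whose edge ideal has height $h$, so that $\beta(G)$ is a generator of $\mathcal{C}_m^h$. I would extend $G$ to a graph $G'$ on $[n]$ by adjoining the isolated vertices $m+1, \dots, n$.

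By construction $G'$ and $G$ have the same non-isolated vertices and the same edges, so $(G')^* = G^*$; hence Proposition \ref{beta-iso} gives $\beta(G') = \beta((G')^*) = \beta(G^*) = \beta(G)$. It therefore remains only to confirm that $I(G')$ still has height $h$, so that $\beta(G')$ is a legitimate generator of $\mathcal{C}_n^h$.

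This last point is the sole place where the argument differs from Lemma \ref{Cm in Cn}, and I would handle it using the vertex-cover characterization of height in Lemma \ref{ht}. Since the added vertices $m+1, \dots, n$ lie in no edge, the edge set of $G'$ coincides with that of $G$; consequently a set $U \subseteq [n]$ is a vertex cover of $G'$ if and only if $U \cap [m]$ is a vertex cover of $G$. It follows that $G'$ and $G$ have minimal vertex covers of the same size, namely $h$, so by Lemma \ref{ht} the ideal $I(G')$ has height $h$.

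Combining these observations, $\beta(G') \in \mathcal{C}_n^h$ while $\beta(G) = \beta(G')$, so every generator of $\mathcal{C}_m^h$ lies in $\mathcal{C}_n^h$; as $\mathcal{C}_n^h$ is a convex cone, this yields the desired inclusion $\mathcal{C}_m^h \subseteq \mathcal{C}_n^h$. I do not expect a genuine obstacle, since the entire content of the lemma is the height-preservation check, which is immediate once phrased in terms of vertex covers. (If $h \geq m$ there are no height-$h$ graphs on $[m]$, so $\mathcal{C}_m^h = \{0\}$ and the inclusion is trivial, requiring no separate treatment.)
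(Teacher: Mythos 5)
Your proposal is correct and follows essentially the same route as the paper: extend $G$ by isolated vertices, invoke Proposition \ref{beta-iso} to preserve the Betti diagram, and use Lemma \ref{ht} to see the height is unchanged. You simply spell out the vertex-cover argument for height preservation in more detail than the paper does.
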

	\begin{proof}
		Let $G$ be a graph on $[m]$ of height $h$, and extend it to a graph on $[n]$ by adding some isolated vertices. By Proposition \ref{beta-iso}, this extension does not affect the graph's Betti diagram, and by Lemma \ref{ht}, it does not affect the graph's height. This means the diagram $\beta(G)$ lies inside $\mathcal{C}_n^h$, and the result follows.
	\end{proof}
	
	To help us in finding our linearly independent diagrams in $\mathcal{C}_n^h$, we use the following proposition and corollaries. For all three, we use the notation $\beta^c(G)$ for the diagram $\beta(G^c)$ to avoid over-using parentheses in longer expressions.
	\begin{prop}\label{beta-adding-Em}
		Let $l$ and $m$ be non-negative integers with $l<m$. Suppose $G$ is a graph on $[l]$, and let  $\beta^c = \beta^c(G)=\beta(G^c)$. Define $\widetilde{\beta}^c=\beta^c(G+E_{m-l})$.
		\begin{enumerate}
			\item for every $0\leq i \leq m -2$, we have $\widetilde{\beta}^c_{i,i+2}\neq 0$.
			\item for every $(i,d)$ in $S_m$ with $d-i\geq 3$, we have $\widetilde{\beta}^c_{i,d}=\sum_{j=0}^{i} \beta_{j,j+d-i}$.
		\end{enumerate}
	\end{prop}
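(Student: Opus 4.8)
The plan is to translate everything into reduced simplicial homology via Hochster's Formula (Proposition \ref{hoc}). Write $\Lambda = \Delta(G^c)$ for the independence complex of $G^c$ on $[l]$, and recall that this is exactly the clique complex of $G$, so that $\beta^c_{j,e} = \beta_{j,e}(I_\Lambda)$. The independence complex $\Lambda' = \Delta((G+E_{m-l})^c)$ is the clique complex of $G + E_{m-l}$; since the $m-l$ new vertices are isolated in $G+E_{m-l}$, this clique complex is obtained from $\Lambda$ simply by adjoining $m-l$ extra isolated points. In other words $\Lambda'$ is the disjoint union of $\Lambda$ with $m-l$ points, and for any $U \subseteq [m]$, writing $U_0 = U \cap [l]$ and $U_1 = U \setminus [l]$, the induced subcomplex decomposes as $\Lambda'_U = \Lambda_{U_0} \sqcup (|U_1|\text{ points})$. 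Every statement about $\widetilde{\beta}^c$ then becomes a statement about these induced subcomplexes.

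For part 1, I would use that the entries $\widetilde{\beta}^c_{i,i+2}$ sit in homological degree $d-i-2 = 0$. By Hochster's Formula, $\widetilde{\beta}^c_{i,i+2} = \sum_{|U|=i+2} \dim_k \widetilde{H}_0(\Lambda'_U)$, and each summand is a nonnegative integer, so it suffices to produce a single $U$ of size $i+2$ with $\Lambda'_U$ disconnected. Since $l<m$ there is at least one isolated vertex available; choosing $U$ to contain one such vertex together with any $i+1$ of the remaining $m-1$ vertices (possible exactly because $i \leq m-2$) makes that vertex an isolated point of $\Lambda'_U$, so $\Lambda'_U$ has at least two components and $\widetilde{H}_0(\Lambda'_U) \neq 0$. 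This forces $\widetilde{\beta}^c_{i,i+2} > 0$. For part 2, the key point is that $d-i\geq 3$ puts us in homological degree $p := d-i-2 \geq 1$, where disjoint point-summands contribute nothing: $\widetilde{H}_p(\Lambda'_U) \cong \widetilde{H}_p(\Lambda_{U_0})$ for $p \geq 1$. Hochster's Formula then gives $\widetilde{\beta}^c_{i,d} = \sum_{|U|=d} \dim_k \widetilde{H}_p(\Lambda_{U_0})$, and I would reorganize this sum by the size $a = |U_0|$. Applying Hochster's Formula to $\Lambda$ itself identifies $\sum_{|W|=a}\dim_k \widetilde{H}_p(\Lambda_W) = \beta^c_{a-p-2,\,a}$, and the substitution $a = j+d-i$ (so that $a-p-2 = j$) rewrites each block as $\beta^c_{j,\,j+d-i}$, matching the index range $j = 0,\dots,i$ of the claimed sum.

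The main obstacle is precisely this reorganization. When I group the sum $\sum_{|U|=d}\dim_k\widetilde{H}_p(\Lambda_{U_0})$ by the value $W = U_0$, each induced subcomplex $\Lambda_W$ is counted once for every way of completing $W$ to a $d$-subset of $[m]$ using the isolated vertices, i.e.\ with a multiplicity depending on $m-l$ and $d-|W|$. The genuine content of the proof is to check that, over the range dictated by $(i,d)\in S_m$ and $d-i\geq 3$, these multiplicities collapse to give exactly the coefficient pattern of $\sum_{j=0}^{i} \beta^c_{j,j+d-i}$; this is where I would be most careful, cross-checking small cases (e.g.\ $G^c$ a cycle, where $\beta^c$ has a single higher-strand entry) to pin down the counting before committing to the general argument. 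The homological inputs — Hochster's Formula and the additivity of reduced homology over disjoint unions, together with the vanishing of point-contributions in degrees $p \geq 1$ — are entirely standard; all the difficulty is combinatorial bookkeeping in passing from the sum over $U$ to the sum over $j$.
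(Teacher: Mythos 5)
Your setup and your treatment of part (1) coincide with the paper's own proof: both pass to the clique complex $\Delta$ of $G$, observe that the clique complex of $G+E_{m-l}$ is $\Delta$ with $m-l$ isolated points adjoined, and run everything through Hochster's Formula together with the behaviour of reduced homology under disjoint unions. The problem is the step you explicitly flag and then defer. The multiplicities do \emph{not} collapse: when you group $\sum_{|U|=d}\dim_k\widetilde{H}_p(\Lambda_{U_0})$ by $W=U_0$, each $W\subseteq[l]$ with $|W|=a$ occurs once for every choice of the remaining $d-a$ vertices among the $m-l$ isolated ones, i.e.\ $\binom{m-l}{d-a}$ times, so the bookkeeping yields
\[
\widetilde{\beta}^c_{i,d}=\sum_{j=0}^{i}\binom{m-l}{i-j}\,\beta_{j,j+d-i},
\]
not the unweighted sum in the statement. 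Since you never carry this out, your proposal has a genuine gap at exactly the point where all the content lies; and no amount of care there will produce the stated identity, because the stated identity fails once $m-l\geq 2$. Concretely, take $G=C_5$ and $m=7$. Then $\Delta$ is the $5$-cycle, homotopy equivalent to a circle, and $\beta^c=\beta(C_5^c)$ is supported on $(0,2)$, $(1,3)$, $(2,5)$ with $\beta_{2,5}=1$. For $(i,d)=(3,6)\in S_7$ we have $p=1$, and the two subsets $U=[5]\cup\{6\}$ and $U=[5]\cup\{7\}$ each contribute $\dim_k\widetilde{H}_1(\Delta)=1$ to Hochster's sum, giving $\widetilde{\beta}^c_{3,6}=2$, whereas the claimed formula gives $\sum_{j=0}^{3}\beta_{j,j+3}=\beta_{2,5}=1$.

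You should know that the paper's own proof commits the same error silently: it replaces the inner sum over $\{U\subseteq[m]\ :\ |U|=d,\ |U\cap[l]|=r\}$ by a sum over $\{U\subseteq[l]\ :\ |U|=r\}$ without the factor $\binom{m-l}{d-r}$. So you located the weak point correctly --- you simply needed to push through it, at which point you would have discovered that the proposition requires the binomial weights. The downstream applications (Corollaries \ref{cor-Kl} and \ref{cor-Cl}, and the initiality arguments built on them) only use which entries are non-zero, and those conclusions survive, because $\binom{m-l}{i-j}$ is strictly positive exactly when $0\leq i-j\leq m-l$, which reproduces the claimed supports.
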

	\begin{proof}
		Let $\Delta$ be the complex of cliques of $G$ and $\widetilde{\Delta}$ be the complex of cliques of $G+E_{m-l}$. The complex $\widetilde{\Delta}$ can be obtained from $\Delta$ by adding $m-l$ isolated vertices. We assume $\widetilde{\Delta}$ has vertex set $[m]$ and label these additional vertices $l+1,...,m$.
		
		For part (1), we note that for any subset $U\subseteq [m]$ that contains both a vertex in $[l]$ and a vertex in $[m]-[l]$, the complex $\widetilde{\Delta}_U$ must be disconnected. Or in other words, we have $\widetilde{H}_0(\widetilde{\Delta})\neq 0$. By Hochster's Formula, we get that $\widetilde{\beta}^c_{0,2},...,\widetilde{\beta}^c_{m-2,m}\neq 0$.
		
		For part (2), suppose $(i,d)\in S_m$ with $d-i\geq 3$. The addition of isolated vertices to $\Delta$ has no affect on homologies of degree greater than zero. This means that for a subset $U\subseteq [m]$, the only part of $U$ that contributes to the $(d-i-2)^\text{nd}$ homology of $\widetilde{\Delta}_U$ is $U\cap [l]$, and so we have $\widetilde{H}_{d-i-2}(\widetilde{\Delta}_U) = \widetilde{H}_{d-i-2}(\Delta_{U\cap[l]})$. Thus, by Hochster's Formula, we get
		
		\begin{align*}
			\widetilde{\beta}^c_{i,d} &=\sum_{\substack{U\subseteq[m]\\ |U|=d}} \dim_k \widetilde{H}_{d-i-2}(\widetilde{\Delta}_U)\\
			&=\sum_{r=0}^{d} \text{      } \sum_{\substack{U\subseteq[m]\\ |U|=d \\ |U\cap [l]|=r}} \dim_k \widetilde{H}_{d-i-2}(\widetilde{\Delta}_U)\\
			&=\sum_{r=0}^d \text{      } \sum_{\substack{U\subseteq[l]\\ |U|=r}} \dim_k \widetilde{H}_{d-i-2}(\Delta_U)\\
			&=\sum_{r=0}^d \text{      } \sum_{\substack{U\subseteq[l]\\ |U|=r}} \dim_k \widetilde{H}_{r-(r+i-d)-2}(\Delta_U)\\
			&=\sum_{r=0}^d \beta_{r+i-d,r}\\
			&=\sum_{j=0}^i \beta_{j,j+d-i} \, .
		\end{align*}
	\end{proof}

	\begin{cor}\label{cor-Kl}
		Let $l$ and $m$ be non-negative integers with $l<m$. We have
		\begin{equation*}
			\beta^c(K_l + E_{m-l}) =
			\begin{bmatrix}
				\beta_{0,2} & \dots & \beta_{m-2,m}
			\end{bmatrix}\, .
		\end{equation*}
	\end{cor}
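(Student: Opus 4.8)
The plan is to apply Proposition \ref{beta-adding-Em} directly with $G = K_l$, exploiting the fact that the complement of a complete graph is an empty graph, whose Betti diagram vanishes.

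First I would set $G = K_l$, so that $G + E_{m-l} = K_l + E_{m-l}$, and write $\widetilde{\beta}^c = \beta^c(K_l + E_{m-l})$ as in the statement of Proposition \ref{beta-adding-Em}. The key observation is that $G^c = (K_l)^c = E_l$, and hence by Proposition \ref{beta-iso} we have $\beta = \beta^c(G) = \beta(E_l) = 0$. In other words, every entry $\beta_{j,d}$ appearing on the right-hand side of part (2) of Proposition \ref{beta-adding-Em} is zero.

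Next I would read off the two parts of Proposition \ref{beta-adding-Em} in turn. Part (2) tells us that for every $(i,d) \in S_m$ with $d - i \geq 3$ we have $\widetilde{\beta}^c_{i,d} = \sum_{j=0}^{i} \beta_{j,j+d-i} = 0$, so no nonzero entry of $\beta^c(K_l + E_{m-l})$ lies strictly below the first row. Since every relevant Betti position satisfies $d \geq i+2$, i.e. $d - i \geq 2$, the only row that can carry nonzero entries is the row $d - i = 2$, namely the positions $(0,2),(1,3),\dots,(m-2,m)$. Part (1) then guarantees that each of the entries $\widetilde{\beta}^c_{0,2}, \dots, \widetilde{\beta}^c_{m-2,m}$ in this row is in fact nonzero. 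Combining these two observations yields exactly the single-row shape asserted in the corollary.

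I do not expect a genuine obstacle here, since the corollary is an immediate specialization of Proposition \ref{beta-adding-Em}. The only point requiring a moment's care is the identification $K_l^c = E_l$ together with $\beta(E_l) = 0$, as this is precisely what collapses the sum in part (2) to zero and confines the diagram to its first row; once that is in place the result follows.
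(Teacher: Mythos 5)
Your proposal is correct and follows the paper's own argument exactly: the paper likewise observes that $\beta^c(K_l)=\beta(E_l)=0$ via Proposition \ref{beta-iso} and then invokes Proposition \ref{beta-adding-Em}, with your write-up simply spelling out the roles of parts (1) and (2) in more detail.
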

	\begin{proof}
		The diagram $\beta^c(K_l)$ is equal to $\beta(E_l)$, so this follows directly from Proposition \ref{beta-iso} and Proposition \ref{beta-adding-Em}.
	\end{proof}
	
	\begin{cor}\label{cor-Cl}
		Let $l$ and $m$ be non-negative integers with $l<m$. We have
		\begin{equation*}
			 \beta^c(C_l + E_{m-l}) =
			\begin{bmatrix}
				\beta_{0,2} & \dots & \beta_{l-3,l-1} & \dots & \beta_{m-2,m}\\
				& & \beta_{l-3,l} & \dots & \beta_{m-3,m} 
			\end{bmatrix}\, .
		\end{equation*}
	\end{cor}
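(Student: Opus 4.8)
The plan is to read $\beta^c(C_l + E_{m-l})$ directly off the known diagram of $\beta^c(C_l)$ by feeding it through Proposition \ref{beta-adding-Em}, in exact parallel with the argument for Corollary \ref{cor-Kl}. First I would set $G = C_l$ and $\beta = \beta^c(C_l) = \beta((C_l)^c)$, and invoke Proposition \ref{beta-Cn} (with $m$ replaced by $l$) to record the shape of $\beta$: its only nonzero entries are $\beta_{0,2}, \dots, \beta_{l-4,l-2}$ in the row $d-i=2$, together with the single entry $\beta_{l-3,l}=1$ in the row $d-i=3$. Writing $\widetilde{\beta}^c = \beta^c(C_l + E_{m-l})$, the goal is to show $\widetilde{\beta}^c$ has the displayed two-row shape.

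For the top row, Proposition \ref{beta-adding-Em}(1) immediately gives $\widetilde{\beta}^c_{i,i+2}\neq 0$ for every $0\leq i\leq m-2$, which accounts for the entire first row $\widetilde{\beta}^c_{0,2},\dots,\widetilde{\beta}^c_{m-2,m}$.

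For the remaining rows I would apply Proposition \ref{beta-adding-Em}(2), which for $(i,d)\in S_m$ with $d-i\geq 3$ evaluates $\widetilde{\beta}^c_{i,d}$ as the convolution sum $\sum_{j=0}^{i}\beta_{j,j+d-i}$. The key observation is that this sum only sees entries of $\beta$ lying in the single row indexed by $d-i$. When $d-i=3$, the only nonzero summand is $\beta_{l-3,l}=1$, so $\widetilde{\beta}^c_{i,i+3}$ equals $1$ precisely when $i\geq l-3$ and vanishes otherwise; this produces the run $\widetilde{\beta}^c_{l-3,l},\dots,\widetilde{\beta}^c_{m-3,m}$ that forms the second row. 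When $d-i\geq 4$, every term $\beta_{j,j+d-i}$ is zero, so the sum vanishes and no further rows appear.

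Assembling these three observations gives exactly the diagram in the statement. I expect no substantive difficulty, since all the homological content is already packaged into Propositions \ref{beta-Cn} and \ref{beta-adding-Em}; the main thing to watch is the index bookkeeping, namely confirming that the nonzero second-row entries begin in column $l-3$ (where $\beta_{l-3,l}$ first enters the partial sum) and end in column $m-3$ (the last pair $(i,i+3)$ in $S_m$), and checking that the degenerate small-$l$ situation remains consistent with the stated shape.
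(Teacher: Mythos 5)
Your argument is correct and takes exactly the paper's approach: the paper's entire proof reads ``This follows directly from Proposition \ref{beta-Cn} and Proposition \ref{beta-adding-Em}'', and your write-up simply supplies the row-by-row index bookkeeping (top row from part (1), the $d-i=3$ row from the convolution sum picking up the single entry $\beta_{l-3,l}=1$, and vanishing for $d-i\geq 4$) that the one-line proof leaves implicit.
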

	\begin{proof}
		This follows directly from Proposition \ref{beta-Cn} and Proposition \ref{beta-adding-Em}.
	\end{proof}
	
	Just as in Section \ref{sec: lb Cn}, our key tool for finding our linearly independent diagrams is the concept of initiality presented in Section \ref{sec: index}. 
	
	However, there are two ways in which we need to refine our approach. The first is that we need to use a different ordering from the one given in Definition \ref{def: order-Cn}, and the second is that we only need to find $h(n-h-h) +1$ diagrams, whereas $|S_n^h|=h(n-h)$ so there are $(h-1)$ values of $(i,d)$ in $S_n^h$ for which we do not need to find $(i,d)$-initial graphs.
	
	To motivate our new approach, we consider the following example.
	
	\begin{ex}\label{ex-Cnh}
		We consider the cone $\mathcal{C}_6^3$. The set $S_6^3$ has size $3\times (6-3)=9$. In fact, it is equal to $S_6$, and it looks like the following.
		\begin{equation*}
			\begin{matrix}
				(0,2) & (1,3) & (2,4) & (3,5) & (4,6)\\
				& (1,4) & (2,5) & (3,6) &\\
				& & (2,6) & &
			\end{matrix}
		\end{equation*}
		
		We want to show that $\dim \mathcal{C}_6^2=3\times (6-3-1) + 1 = 7$, so we need to find seven linearly independent diagrams in $\mathcal{C}_6^3$.
		
		We can do this by imposing an ordering $\prec_3$ on $S_6^3$, and finding $(i,d)$-initial graphs with respect to that ordering for seven values of $(i,d)$ in $S_6^3$.
		
		By corollaries \ref{cor-Kl} and \ref{cor-Cl}, Lemma \ref{beta-Cn} and Lemma \ref{beta-L}, we know the shapes of the following Betti diagrams. Beside each diagram $\beta$, we note down a value of $(i,d)$ such that $\beta_{i,d}$ is non-zero but for every diagram $\beta'$ listed above it, $\beta'_{i,d}$ is zero.
			\begin{center}
			\begin{tabular}{ l|l|l }
				$(i,d)$&Betti Diagram & Shape of Betti Diagram \\
				\hline
				$(2,4)$&$\beta^c(E_3+K_1)$ & $\begin{bmatrix}
					\beta_{0,2} & \beta_{1,3} & \beta_{2,4}
				\end{bmatrix}$\\
				&& \\
				$(3,5)$&$\beta^c(E_3+K_2)$ & $\begin{bmatrix}
					\beta_{0,2} & \beta_{1,3} & \beta_{2,4} & \beta_{3,5}
				\end{bmatrix}$\\
				&& \\
				$(4,6)$&$\beta^c(E_3+K_3)$ & $\begin{bmatrix}
					\beta_{0,2} & \beta_{1,3} & \beta_{2,4} & \beta_{3,5}& \beta_{4,6}
				\end{bmatrix}$\\
				&& \\
				$(2,5)$&$\beta^c(C_5)$ & $\begin{bmatrix}
					\beta_{0,2} & \beta_{1,3} &\\
					&&\beta_{2,5}
				\end{bmatrix}$\\
				&& \\
				$(1,4)$&$\beta^c(C_4+E_1)$ & $\begin{bmatrix}
					\beta_{0,2} & \beta_{1,3} & \beta_{2,4}\\
					&\beta_{1,4}&\beta_{2,5}
				\end{bmatrix}$\\
				&& \\
				$(3,6)$&$\beta((K_2+E_2)^c+L)$ & $\begin{bmatrix}
					\beta_{0,2} & \beta_{1,3} & \beta_{2,4}&\beta_{3,5}&\beta_{4,6}\\
					&\beta_{1,4}&\beta_{2,5}&\beta_{3,6}&
				\end{bmatrix}$\\
				&& \\
				$(2,6)$&$\beta(L+L+L)$ & $\begin{bmatrix}
					\beta_{0,2} &&\\
					&\beta_{1,4}&\\
					& & \beta_{2,6}
				\end{bmatrix}$\\
			\end{tabular}
		\end{center}
	
	If we define our ordering $\prec_3$ as $(0,2)\prec_3 (1,3)\prec_3 (2,4)\prec_3 (3,5) \prec_3 (4,6) \prec_3 (2,5) \prec_3 (1,4) \prec_3 (3,6) \prec_3 (2,6)$, then the diagrams we have found are $(i,d)$-initial with respect to $\prec_3$.
	
	This ordering is the same as the ordering $\prec$ given in Definition \ref{def: order-Cn}, except that from row 2 onwards, the order of the elements for which $i<3$ is reversed. We can generalise this construction to get an ordering $\prec_h$ on $S_n^h$ for arbitrary values of $h$.
	
	The graphs corresponding to these diagrams each have $6$ or fewer vertices, and we can show that each of them has height $3$ using Lemma \ref{ht}. Thus all of these diagrams lie in $\mathcal{C}_6^3$, as required.
	
	The final two diagrams in the table come from adding the graph $L$ to graphs of height $h-1$ which are $(i-1,d-2)$-initial with respect to $\prec_{h-1}$. Again, we generalise this process in what follows.\\
	\end{ex}
	
	With the above example in mind, we proceed to defining an ordering $\prec_h$ on $S_n^h$.
	
	\begin{defn}\label{def: order-Cnh}
		For any two pairs $(i,d)$ and $(i',d')$ in $S_n^h$ we write $(i,d)\prec_h (i',d')$ if	any one of the following conditions hold.
		\begin{enumerate}
			\item $d-i<d'-i'$.
			\item $d-i=d'-i'=2$ and $i<i'$.
			\item $d-i=d'-i'>2$, $h\leq i'$, and $i<i'$.
			\item $d-i=d'-i'>2$, $i,i'<h$ and $i>i'$.
		\end{enumerate}
	\end{defn}

	In other words, if $(i,d)$ and $(i',d')$ both lie in the same row, higher than 1, with $i$ and $i'$ less than $h$, then we write $(i,d)\prec_h(i',d')$ provided that $i>i'$. In all other cases, we write $(i,d)\prec_h (i',d')$ if $(i,d)\prec (i,d)$.
 	
 	We have an analogue of Lemma \ref{G+L initiality} for the ordering $\prec_h$.
 	\begin{lem}\label{G+L initiality Cnh}
 		Let $\prec_h$ be as in Definition \ref{def: order-Cnh}, let $(i,d)\in S_n$, and suppose $G$ is a graph of height $h-1$ on vertex set $[n-2]$, which is $(i-1,d-2)$-initial with respect to the ordering $\prec_{h-1}$. The graph $G+L$ has height $h$, and is $(i,d)$-initial with respect to the ordering $\prec_h$.
 	\end{lem}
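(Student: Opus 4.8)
The plan is to prove the two claims of Lemma~\ref{G+L initiality Cnh} in turn: first that $G+L$ has height $h$, and second that $\beta(G+L)$ is $(i,d)$-initial with respect to $\prec_h$. The height claim should be the quick part. Since $G$ has height $h-1$, by Lemma~\ref{ht} it has a minimal vertex cover $U$ of size $h-1$. Adding a disjoint edge $L = \{u,v\}$ on two fresh vertices forces any vertex cover of $G+L$ to contain at least one of $u,v$, so the minimal vertex cover size goes up by exactly one, giving height $h$. I would phrase this carefully: a minimal cover of $G+L$ restricts to a cover of $G$ and must meet $L$, so its size is at least $(h-1)+1 = h$; conversely $U \cup \{u\}$ is a cover of size $h$, so the minimal size is exactly $h$.

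For the initiality claim, the core computational input is Proposition~\ref{beta-L}, which gives $\beta_{i',d'}(G+L) = \beta_{i',d'}(G) + \beta_{i'-1,d'-2}(G)$ for all $(i',d')$. The structure mirrors Lemma~\ref{G+L initiality} exactly. First I would verify non-vanishing at $(i,d)$: by hypothesis $\beta_{i-1,d-2}(G) \neq 0$ (this is the non-vanishing half of $(i-1,d-2)_{\prec_{h-1}}$-initiality), and since the two summands $\beta_{i,d}(G)$ and $\beta_{i-1,d-2}(G)$ are Betti numbers they are non-negative, so their sum $\beta_{i,d}(G+L)$ cannot cancel and is non-zero. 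Then I would check vanishing: for every $(i',d') \in S_n^h$ with $(i,d) \prec_h (i',d')$, both $\beta_{i',d'}(G)$ and $\beta_{i'-1,d'-2}(G)$ must vanish. This reduces to showing that $(i,d)\prec_h (i',d')$ implies $(i-1,d-2) \prec_{h-1} (i'-1,d'-2)$ and $(i-1,d-2)\prec_{h-1}(i',d')$, after which the $(i-1,d-2)_{\prec_{h-1}}$-initiality of $\beta(G)$ finishes the job.

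The main obstacle, and the place requiring genuine care, is this order-translation step under the \emph{shifted and reindexed} relation $\prec_{h-1}$ rather than a single fixed order. Unlike Lemma~\ref{G+L initiality}, here the threshold in Definition~\ref{def: order-Cnh} changes from $h$ to $h-1$ as we pass to $G$, and the shift $(i',d') \mapsto (i'-1,d'-2)$ moves the first coordinate down by one, so the conditions ``$i < h$'' and ``$i' < h$'' become ``$i-1 < h-1$'' and ``$i'-1 < h-1$'', which are the right conditions for $\prec_{h-1}$. I would therefore argue case by case according to which clause of Definition~\ref{def: order-Cnh} certifies $(i,d)\prec_h(i',d')$. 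The row-number clause (1) and the row-1 clause (2) translate directly since the shift preserves differences $d-i$ and preserves the strict inequality $i<i'$. The two subtle clauses are (3) and (4): the point is that subtracting $1$ from both first coordinates preserves both the $i<i'$ (resp.\ $i>i'$) comparison and shifts the threshold comparisons $h \leq i'$, $i,i'<h$ into their $\prec_{h-1}$ counterparts $h-1 \leq i'-1$, $i-1,i'-1 < h-1$. For the second required comparison $(i-1,d-2)\prec_{h-1}(i',d')$, I would note $(i',d')$ sits in a strictly higher row than $(i-1,d-2)$ (its row number is $d'-i'$ versus $(d-2)-(i-1) = d-i-1$, which is strictly smaller since $d-i \leq d'-i'$), so clause (1) of $\prec_{h-1}$ applies immediately. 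Assembling these cases gives the vanishing of every higher coordinate and completes the proof.
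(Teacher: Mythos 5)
Your proposal follows the same route as the paper's own proof: the height claim via extending a minimal vertex cover of $G$ by one vertex of $L$ (you are in fact more careful here than the paper, which simply asserts minimality of the extended cover), and the initiality claim via Proposition~\ref{beta-L} together with a translation of $(i,d)\prec_h(i',d')$ into statements about the shifted pairs under $\prec_{h-1}$. The paper organises that translation differently --- it first disposes of the case $i\geq h$ by reducing to Lemma~\ref{G+L initiality}, then treats $i<h$ directly and hides the combinatorics behind ``it is easy to check'' --- whereas you argue clause by clause through Definition~\ref{def: order-Cnh}. Your direct derivation of $(i-1,d-2)\prec_{h-1}(i',d')$ from the strict drop in row number is cleaner than the paper's transitivity chain through $(i'-1,d'-2)$.

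There is, however, one boundary case where your blanket claim that clauses (3) and (4) ``translate'' is actually false, namely $d-i=d'-i'=3$. There the shifted pairs $(i-1,d-2)$ and $(i'-1,d'-2)$ have difference $2$, so they are compared by clause (2) of $\prec_{h-1}$ (ascending first coordinate), not by clauses (3) or (4). For clause (3) this is harmless, since $i<i'$ gives $i-1<i'-1$ and clause (2) yields the desired comparison anyway. But for clause (4) the order reverses: clause (4) certifies $(i,d)\prec_h(i',d')$ via $i>i'$ with $i,i'<h$, while clause (2) then gives $(i'-1,d'-2)\prec_{h-1}(i-1,d-2)$, so $(i-1,d-2)\prec_{h-1}(i'-1,d'-2)$ fails and the initiality of $G$ no longer forces $\beta_{i'-1,d'-2}(G)=0$. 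So your argument (and indeed the lemma's conclusion) breaks down when $(i,d)$ lies in row $2$ with $i<h$. You are in good company --- the paper's proof has the identical gap under its ``easy to check'', and the lemma is only ever invoked in Proposition~\ref{dimCnh lower bound} outside this case, since row-$2$ indices with $i<h$ are handled there by the cyclic-graph construction instead --- but a complete writeup should either exclude this case from the hypotheses or flag it explicitly. (A smaller quibble: your clause-(2) case is vacuous rather than ``directly translating'', since $d-i=2$ would put $(i-1,d-2)$ outside the index set entirely, contradicting $\beta_{i-1,d-2}(G)\neq 0$.)
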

 	\begin{proof}
 		Any vertex cover of size $h-1$ for $G$ can be extended to a vertex cover of size $h$ for $G+L$ by adding one of the two vertices in $L$. Moreover, this vertex cover for $G+L$ is of minimal size, so by Lemma \ref{ht} we conclude that $G+L$ has height $h$.
 		
 		To prove that $G+L$ is $(i,d)$-initial with respect to $\prec_h$, we need to consider a few separate cases.
 		
 		First we consider the case $i \geq h$. In this case, $(i,d)\prec_h (i',d')$ if and only if $(i,d)\prec (i',d')$. Similarly, because $i-1 \geq h-1$, we have $(i-1,d-2)\prec_{h-1} (i'-1,d'-2)$ if and only if $(i'-1,d'-2)\prec (i-1,d-2)$. So the result follows immediately from Lemma \ref{def: order-Cn}.
 		
 		Note also that $(i-1,d-2)$ is in a lower numbered row of $S_n^h$ than $(i,d)$, so $(i,d)$ cannot be in row 1. So we may assume that $(i,d)$ lies in a row greater than 1, and $i< h$. This means $i-1< h-1$.
 		
 		By Proposition \ref{beta-L}, we have $\beta_{i,d}(G+L) = \beta_{i,d}(G) + \beta_{i-1,d-2}(G)$. By the $(i-1,d-2)_{\prec_{h-1}}$-initiality of $G$, we have $\beta_{i-1,d-2}(G) \neq 0$, so $\beta_{i,d}(G+L)$ must be non-zero too.
 		
 		Now let $(i',d')\in S_n$ with $(i,d)\prec_h (i',d')$. Again, by Proposition \ref{beta-L} we have $\beta_{i',d'}(G+L) = \beta_{i',d'}(G) + \beta_{i'-1,d'-2}(G)$. It is easy to check that if $(i,d)\prec_{h}(i',d')$, then $(i-1,d-2)\prec_{h-1}(i'-1,d'-2)$, and we must have $(i'-1,d'-2)\prec_{h-1} (i',d')$ because they are in different rows. Thus, by the $(i-1,d-2)_{\prec_{h-1}}$-initiality of $G$, both the terms $\beta_{i',d'}(G)$ and $\beta_{i'-1,d'-2}(G)$ are zero, and hence $\beta_{i',d'}(G+L)$ is zero too.
 		
 		This shows that $\beta(G+L)$ is $(i,d)$-initial with respect to the ordering $\prec_h$ as required.
 	\end{proof}
 	
 	Finally, we can move on to proving that the formula in Theorem \ref{dimCnh} is a lower bound for $\dim\mathcal{C}_n^h$.
 	\begin{prop}\label{dimCnh lower bound}
 		We have $\dim \mathcal{C}_n^h \geq h(n-h-1)+1$.
 	\end{prop}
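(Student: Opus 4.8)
The plan is to exhibit a linearly independent set of $h(n-h-1)+1$ Betti diagrams lying in $\mathcal{C}_n^h$, using the concept of $(i,d)_{\prec_h}$-initiality developed in the preceding lemmas. By the discussion of Section \ref{sec: index}, it suffices to specify the ordering $\prec_h$ on $S_n^h$ (already given in Definition \ref{def: order-Cnh}) and then produce an $(i,d)_{\prec_h}$-initial graph of height $h$ on at most $n$ vertices for $h(n-h-1)+1$ of the pairs $(i,d) \in S_n^h$. Since $|S_n^h| = h(n-h)$ by Proposition \ref{|S_n^h|}, we must deliberately \emph{omit} exactly $h-1$ of the pairs; the natural choice, suggested by Example \ref{ex-Cnh}, is to omit the $h-1$ pairs in each row that sit at the $\prec_h$-largest positions among those with $i < h$ (these are precisely the positions whose Herzog--K\"uhl relations make them redundant in $W_n^h$).

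First I would induct on $h$. For the base case $h = 1$, the height-one graphs are stars, and I would use Corollary \ref{cor-Kl}: the diagrams $\beta^c(K_l + E_{m-l})$ supply a single nonzero row $\begin{bmatrix}\beta_{0,2} & \dots & \beta_{m-2,m}\end{bmatrix}$, and varying $m$ from $2$ to $n$ gives $n-2 = 1\cdot(n-1-1)$ diagrams that are initial for the distinct top-row entries, plus one more, matching $h(n-h-1)+1 = n-1$. For the inductive step I would split $S_n^h$ into two parts. For pairs $(i,d)$ with $i \geq h$ (together with the whole of row $1$), I would build initial diagrams directly from Corollaries \ref{cor-Kl} and \ref{cor-Cl}, choosing graphs of the form $(K_l + E_{m-l})^c$ and $(C_l + E_{m-l})^c$ whose shapes, read off those corollaries, are $\prec_h$-initial at the required pairs and whose heights can be verified to equal $h$ via Lemma \ref{ht}. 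For the remaining pairs with $i < h$ lying in rows above the first, I would invoke Lemma \ref{G+L initiality Cnh}: by the inductive hypothesis there is a height-$(h-1)$ graph $G$ on $[n-2]$ that is $(i-1,d-2)_{\prec_{h-1}}$-initial, and then $G+L$ is a height-$h$ graph on $[n]$ that is $(i,d)_{\prec_h}$-initial.

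I would then check two bookkeeping points. First, that every graph produced has at most $n$ vertices, so that by Lemma \ref{Cmh in Cnh} its diagram genuinely lies in $\mathcal{C}_n^h$; the $+L$ construction adds exactly two vertices and lifts a graph on $[n-2]$, while the complement constructions use $m \leq n$ vertices by design. Second, that the pairs for which I actually produce initial diagrams number exactly $h(n-h-1)+1$, i.e.\ that the inductive count $(h-1)(n-h)+1$ of initial diagrams inherited from level $h-1$, augmented by the new row-1 and $i \geq h$ diagrams and reduced by the deliberately omitted redundant pairs, telescopes correctly. This counting reconciliation is where I expect the main obstacle to lie: one must confirm that the set of pairs handled by the complement constructions (those with $i \geq h$ or in row $1$) together with the $+L$-images of the inductively chosen pairs covers precisely the intended $h(n-h-1)+1$ positions, with no overlaps and with the omitted $h-1$ positions exactly the $\prec_h$-maximal ones in each row carrying $i<h$. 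Once the ordering $\prec_h$ is pinned down this is a finite combinatorial verification on the staircase shape \eqref{matrix-C_n^h}, but it requires care to ensure that reversing the order on the $i<h$ entries (as in Definition \ref{def: order-Cnh}) makes each constructed diagram initial at the correct pair and that the omitted entries never serve as the initial position of any retained diagram.
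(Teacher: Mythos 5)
Your overall strategy is the paper's: order $S_n^h$ by $\prec_h$, produce height-$h$ initial diagrams at all but $h-1$ positions using Corollaries \ref{cor-Kl} and \ref{cor-Cl} together with the $G+L$ step of Lemma \ref{G+L initiality Cnh}, and induct on $h$. However, your assignment of constructions to positions is essentially backwards, and as stated it fails. You propose to handle all pairs with $i\geq h$ (beyond row 1) directly from the two corollaries, but neither corollary can reach such a pair: $\beta^c(K_l+E_{m-l})$ is supported on row 1 only, and $(C_l+E_{m-l})^c$ has height $m-2$ by Lemma \ref{ht}, so forcing height $h$ forces $m=h+2$, whence its second row runs from $(l-3,l)$ to $(h-1,h+2)$ and its support never meets $i\geq h$; moreover neither corollary produces anything in rows $3$ and higher. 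The paper does the opposite: the cycle complements $(C_d+E_{h+2-d})^c$ are used precisely for the row-2 positions with $i<h$ (where the reversal in Definition \ref{def: order-Cnh} makes $(d-3,d)$ the initial position), and \emph{all} remaining positions --- including every $(i,d)$ with $i\geq h$ in rows $\geq 2$ --- are obtained as $G^{h-1}_{i-1,d-2}+L$ via Lemma \ref{G+L initiality Cnh}. The cycle complements cannot be replaced by the $+L$ step either, because for $i\leq h-2$ the needed position $(i-1,d-2)$ lies among the pairs omitted at level $h-1$.

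Your choice of which $h-1$ pairs to omit is also wrong, and this is not a free choice. The omitted pairs must be $(0,2),\dots,(h-2,h)$: a diagram that is $(i,i+2)_{\prec_h}$-initial is supported on row 1 with $\pd I(G)\leq i$, while height $h$ forces $\pd R/I(G)\geq h$, so no height-$h$ graph can be $(i,i+2)$-initial for $i\leq h-2$. Omitting instead the $\prec_h$-largest positions with $i<h$ in each row would leave you obliged to produce initial diagrams at these impossible row-1 positions (and would omit positions such as $(1,4)$ where the paper does exhibit an initial diagram, e.g.\ $(C_4+E_{h-2})^c$). Finally, the vertex count is not the afterthought you describe: since each application of $+L$ adds two vertices, one must carry the sharper bound $\max\{d,\,h+d-i-1\}$ on the number of vertices of $G^h_{i,d}$ through the induction, as the paper does, to conclude that everything stays within $n$ vertices.
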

	\begin{proof}
 		We want to show that there exists a set of graphs $\{G^h_{i,d}|(i,d)\in S_n^h-\{(0,2),...,(h-2,h)\}$ such that for each $(i,d)$ in $S_n$, $G^h_{i,d}$ is a graph of height $h$ which is $(i,d)$-initial with respect to $\prec_h$. We also show that these graphs have $n$ or fewer vertices, which proves the result, because it ensures that the Betti diagrams of these graphs all lie inside $\mathcal{C}_n^h$ by Lemma \ref{Cmh in Cnh}.
 		
 		Specifically, we need to ensure that every graph $G^h_{i,d}$ we find has no more than $\max\{d,h+d-i-1\}$ vertices. This is sufficient because for every $(i,d)\in S_n^h$, we have $d\leq n$ and also $d-i\leq \min \{h,n-h\}+1\leq n-h +1$, so $h+d-i-1\leq h+(n-h+1)-1=n$.
 		
 		As in example \ref{ex-Cnh}, we define a lot of these graphs in terms of their complements $H^h_{i,d}=(G^h_{i,d})^c$
 		
 		We start by finding $(i,d)$-initial graphs for all the values of $(i,d)$ in row 1 of $S_n^h-\{(0,2),...,(h-2,h)\}$ (i.e. those for which $d-i=2$). So let $(i,d)$ be any such value. We have assumed that $(i,d)\neq (0,2),...,(h-2,h)$ so we know that $d>h$.
 		
 		This allows us to define $H^h_{i,d}=E_h+K_{d-h}$. By Corollary \ref{cor-Kl}, the complement of this graph is $(i,d)$-initial with respect to $\prec$ which is the same as being $(i,d)$-initial with respect to $\prec_h$ for the values of $(i,d)$ in row 1. We can obtain a complete induced subgraph of $H^h_{i,d}$ by removing the $h$ vertices in $E_h$, which means that its complement has height $h$ by Lemma \ref{ht}. Also it has $d$ vertices.
 		
 		This completes the proof for the case $h=1$ and $h=n-1$, because $S_n^1=S_n^{n-1}$ only has one row. We can now proceed by induction on $h>1$, using the completed $h=1$ case as our base case. So we assume that $2\leq h\leq n-2$, and that we have found an appropriate set of graphs for the cone $\mathcal{C}_n^{h-1}$.
 		
 		Let $(i,d)$ be in row 2 of $S_n^h$ (i.e. $d-i=3$) and suppose $i<h$. We can define $H_{i,d}^h=C_d+E_{h+2-d}$. By Corollary \ref{cor-Kl}, the complement of this graph is $(i,d)$-initial with respect to $\prec_h$. Also, we can obtain a complete induced subgraph of $H^h_{i,d}$ by removing $h$ of the $h+2$ vertices and leaving any two adjacent vertices in $C_d$. By Lemma \ref{ht}, this means that its complement has height $h$. Also it has $(h+2)=(h+d-i-1)$ vertices.
 		
 		For every other value of $(i,d)$ in $S_n^h$, we can define $G_{i,d}^h=G_{i-1,d-2}^{h-1} +L$. By Lemma \ref{ht}, these values of $G_{i,d}$ have height $h$ and by Proposition \ref{G+L initiality Cnh}, they are $(i,d)$-initial with respect to $\prec_h$, as required.
 		
 		It only remains to show that the number of vertices of these graphs is bounded above by at least one of $d$ or $(h+d-i-1)$. If the graph $G^{h-1}_{i-1,d-2}$ has at most $d-2$ vertices, then $G^h_{i,d}$ has at most $d$ vertices. On the other hand, if $G^{h-1}_{i,d}$ has at most $(h-1)+(d-2)-(i-1)-1=h+d-i-3$ vertices, then $G^h_{i,d}$ has at most $(h+d-i-1)$ vertices. This completes the proof.\\
 	\end{proof}

 	This result shows that, up to linear combinations, the Herzog-K\"{u}hl equations are the \textit{only} non-trivial linear dependency relations that hold for every diagram in $\mathcal{C}_n^h$. The term `\textit{non-trivial}' here refers to relations that only feature variables $\beta_{i,d}$ for which $(i,d)\in S_n^h$. By Proposition \ref{Snh}, any Betti number whose index lies outside $S_n^h$ must be zero, so any linear relation in those Betti numbers is satisfied trivially.
 	
 	This is clear because any non-trivial relation $r(\beta)=0$ defines a hyperplane in $(W_n^h)'$ containing $\mathcal{C}_n^h$, and this hyperplane must contain $W_n^h$, which means $r(\beta)=0$ must be expressible as a linear combination of the relations $HK_1(\beta)=0,...,HK_{h-1}(\beta)=0$. So in this sense, the Herzog-K\"{u}hl equations are in fact best possible for the diagrams in $\mathcal{C}_n^h$.
 		
 	Similarly, the fact that the space $W_n$ is the minimal subspace of $V_n$ containing $\mathcal{C}_n$ shows that there are \textit{no} non-trivial linear dependency relations that hold for every diagram in $\mathcal{C}_n$.
 	
	\pagebreak

	
\end{document}